\addspace\texttt{\mkbibbrackets{\thefield{arxivclass}}}}}}
\addspace\texttt{\mkbibbrackets{\thefield{arxivclass}}}}}}
\newtheorem{theorem}{Theorem}[section]%[section]
\newtheorem{lemma}[theorem]{Lemma}%[section]
\newtheorem{proposition}[theorem]{Proposition}%[section]
\newtheorem{corollary}[theorem]{Corollary}%[section]
\newtheorem{remark}[theorem]{Remark}
\def\om{\omega}
\def\O{\Omega}
\def\Om{\Omega}
\def\p{\partial}
\def\De{\Delta}
\def\S{{\Sigma}}
\def\<{\langle}
\def\>{\rangle}
\def\na{\nabla}
\def\Ga{\Gamma}
\def\rr{\mathbf{R}}
\newcommand{\mcH}{\mathcal{H}}
\newcommand{\mcW}{\mathcal{W}}
\newcommand{\mfR}{\mathbb{R}}
\newcommand{\mfS}{\mathbb{S}}
\newcommand{\ra}{\rightarrow}
\newcommand{\rd}{{\rm d}}
\numberwithin{equation} {section}
\begin{document}
	
	\title{Capillary Schwarz symmetrization in the half-space}
	
	\author{Zheng Lu}
	\address{School of Mathematical Sciences\\
		Xiamen University\\
		361005, Xiamen, P.R. China
  \newline\indent\&
\newline\indent
Mathematisches Institut\\ Albert-Ludwigs-Universität Freiburg\\ Freiburg im Breisgau, 79104, Germany}
  %\address{Mathematisches Institut\\ Albert-Ludwigs-Universität Freiburg\\ Freiburg im Breisgau, 79104, Germany}
	\email{zhenglu@stu.xmu.edu.cn}
	
	\author{Chao Xia}
	\address{School of Mathematical Sciences\\
		Xiamen University\\
		361005, Xiamen, P.R. China}
	\email{chaoxia@xmu.edu.cn}
	
	\author{Xuwen Zhang}
	\address{School of Mathematical Sciences\\
		Xiamen University\\
		361005, Xiamen, P.R. China
  \newline\indent\&
\newline\indent Institut f\"ur Mathematik, Goethe-Universit\"at, %Robert-Mayer-Str. 10,
60325, Frankfurt, Germany}
	\email{xuwenzhang@stu.xmu.edu.cn}
	\thanks{This work is  supported by NSFC (grant no. 12271449).
		}
	
	\begin{abstract}
	In this paper, we introduce a notion of capillary Schwarz symmetrization in the half-space. It can be viewed as the counterpart of the classical Schwarz symmetrization in the framework of capillary problem in the half-space. A key ingredient is a special anisotropic gauge, which enables us to transform the capillary symmetrization to the convex symmetrization introduced in \cite{AFT97}.
		
		\
		
		\noindent {\bf MSC 2020:} 35J25, 35J65, 49Q20 .\\
		{\bf Keywords:} symmetrization, capillary problem, isoperimetric inequality, anisotropic equations. \\
		
	\end{abstract}
	
	\maketitle
	
	\medskip
%---------------------------------------------Section Style---------
\section{Introduction}

Symmetrization is an important technique to prove sharp geometric or functional inequalities. Schwarz symmetrization is a classical one which assigns to a given function, a radially symmetric function whose super or
sub level-sets have the same volume as that of the given function. Important applications include the proof of the Rayleigh-Faber-Krahn inequality on first eigenvalue and the sharp Sobolev inequality, see \cite{PS51, Talenti76a}. 

The classical Schwarz symmetrization is based on the classical isoperimetric inequality. It is in fact a common principle that a symmetrization process is usually accompanied with an isoperimetric-type inequality. Several new kinds of symmetrization has been introduced, for example, Talenti \cite{Talenti81} and Tso \cite{Tso89} introduces the symmetrization with respect to quermassintegrals, based on Alexandrov-Fenchel inequalities for quermassintegrals. 
Alvino-Ferone-Trombetti-Lions \cite{AFT97} introduces the convex symmetrization with respect to convex gauge functions (or anisotropic functions), based on anisotropic isoperimetric inequality. Della Pietra, Gavitone and the second-named author \cite{DPGX21} introduces symmetrization with respect to mixed volumes, based on Alexandrov-Fenchel inequalities for mixed volume. 

Let $\mfR^n_+:=\{x\in \mfR^n: \<x, E_n\>>0\}$ be the upper half-space, where $E_n$ is the $n$-th coordinate unit vector.
The relative isoperimetric inequality, due to De Giorgi, says that for  $\theta\in(0,\pi)$ and a set of finite perimeter $E\subset\mfR^n_+$, it holds that
\begin{align}\label{isop-ineq}
    \frac{P(E; \mfR^n_+)-\cos\theta P(E; \p\mfR^n_+)}{|E|^{\frac{n-1}{n}}}\ge \frac{P(\mathcal{B}; \mfR^n_+)-\cos\theta P(\mathcal{B}; \p\mfR^n_+)}{|\mathcal{B}|^{\frac{n-1}{n}}},
\end{align}
where $\mathcal{B}$ denotes the domains  $B_1(-\cos\theta E_n)\cap \mfR^n_+$, and
equality holds in \eqref{isop-ineq} if and only if $E=B_r(-r\cos\theta E_n)\cap \mfR^n_+$ for some $r>0$. Here $B_r(-r\cos\theta E_n)$ denotes the Euclidean ball of radius $r$ centered at $-r\cos\theta E_n$. Such family of balls shares the common property that their boundaries intersect $\p \mfR^n_+$ at the constant contact angle $\theta$. The functional $P(E; \mfR^n_+)-\cos\theta P(E; \p\mfR^n_+)$ is usually referred to as the free energy functional in capillarity problem, which is natural in the physical model of liquid drops, see for example \cite{Mag12}.

Our purpose of this paper is to introduce a suitable symmetrization, which we shall call capillary Schwarz symmetrization, to be accompanied with the isoperimetric-type inequality \eqref{isop-ineq}.

For a non-positive measurable function $u$ defined on $\mfR^n_+$, We define the capillary Schwarz symmetrization to be
\begin{align*}
    u_\star(x)=\sup\{t\leq0: r_t< \vert x+r_t\cos\theta E_n\vert\},
\end{align*}
where $r_t>0$ is such that $|B_{r_t}(-{r_t}\cos\theta E_n)\cap \mfR^n_+|=|\{u(x)< t\}|$. It is clear by definition that
$|\{u_\star(x)< t\}|=|\{u(x)< t\}|$ and the level-sets for $u_\star(x)$ are the desired model domains $B_r(-r\cos\theta E_n)\cap\mfR^n_+$.

In order to study the property of capillary Schwarz symmetrization, we introduce the following convex gauge 
$F_\theta:\mfR^n\ra\mfR^+$ given by
\begin{align*}
    F_\theta(\xi)=\vert\xi\vert-\cos\theta\left<\xi,E_n\right>.
\end{align*}
We shall call it \textit{capillary gauge}.
One crucial observation for $F_\theta$ is that the Wulff ball of radius $r$ with respect to $F_\theta$,  $\{F_\theta^o(x)<r\}$, is equivalent to  $B_r(-r\cos\theta E_n)$, for any $r>0$.  
This enables us to transform the capillary Schwarz symmetrization to the convex symmetrization (due to Alvino-Ferone-Trombetti-Lions \cite{AFT97}) with respect to $F_\theta$. Compare to \cite{Talenti76, AFT97}, there are two major differences. One is that the special gauge $F_\theta$ is not even, and the other is that we consider the relative version of convex symmetrization. Nevertheless, we are able to show the P\'olya-Szeg\"o principle and the PDE comparison result for such relative convex symmetrization, with an additional non-positive (or non-negative) requirement on functions, following the proof of \cite{AFT97}. More generally, the result holds true in any convex cones where the relative anisotropic isoperimetric inequality holds, see for example \cite{CRS16, DPGV22}.
The corresponding results eventually can be transformed to the capillary Schwarz symmetrization with the help of $F_\theta$.

We remark that the idea of transforming the capillary Schwarz symmetrization to the convex symmetrization is inspired by recent work of De Phillipis-Maggi \cite{PM15}, where they use similar idea to transform regularity of local minimizers in capillarity problems to that in anisotropic problems.
The idea may have future applications in other capillary problems. Here we mention one such application.
In \cite{JWXZ22-A}, Jia et al. proved the following Heintze-Karcher-type inequality for capillary hypersurfaces in $\mfR^n_+$: for a bounded domain $E$ with $\p E\cap \mfR^n_+$ sufficiently smooth and intersecting $\mfR^n_+$ at a contact angle $\theta$, there holds
\begin{align}\label{hk-ineq}
    \int_{\p E\cap \mfR^n_+}\frac{1-\cos\theta\<\nu, E_n\>}{H}\ge \frac{n}{n-1}|E|,
\end{align}
where $H$ is the mean curvature $\p E\cap \mfR^n_+$. As a consequence, Wente's Alexandrov-type theorem for 
capillary constant mean curvature hypersurfaces in the half-space is reproved. We remark that by the gauge $F_\theta$, \eqref{hk-ineq} can be reformulated as 
\begin{align}\label{hk-ineq1}
    \int_{\p E\cap \mfR^n_+}\frac{F_\theta(\nu)}{H_{F_\theta}}\ge \frac{n}{n-1}|E|,
\end{align}
where $H_{F_\theta}$ is the anisotropic mean curvature, which is equal to $H$, thanks to $\nabla^2 F_\theta\mid_{\nu}={\rm Id}$, the identity matrix.
On the other hand, \eqref{hk-ineq1} is a special case of the result in \cite{JWXZ22-B}, where \eqref{hk-ineq} has been generalized to general anisotropic capillary setting. 

The rest of the paper is organized as follows.
In \cref{Sec-2}, we review the anisotropic isoperimetric inequality in convex cones and study the relative convex symmetrization in convex cones.
In \cref{Sec-3}, we introduce the capillary gauge and study its associated properties.
In \cref{Sec-4}, we introduce the capillary Schwarz symmetrization in the half-space and restate the corresponding results in \cref{Sec-2} by using the capillary gauge.

\

\noindent{\bf Acknowledgements.}
We are indebted to Professor Guofang Wang for stimulating discussions on this topic and his constant support.

\

\section{Convex symmetrization in a convex cone}\label{Sec-2}
%-------------
\subsection{Anisotropic isopermetric inequality in a convex cone}\

In this subsection, we review the basic facts on anisotropic perimeter and anisotropic isoperimetric inequality in a convex cone.

Following \cite[(1.6)]{CRS16}, we say that $F:\mfR^n\ra\mfR$ is a \textit{gauge} if $F$ is non-negative, convex, positively one homogeneous, i.e., $F(t\xi)=tF(\xi)$ for all $t>0$, and $F(\xi)>0$ for all $\xi\in\mfS^{n-1}$.
Note that $F$ is not required to be even, which is important for the applications on capillary symmetrization in next section.
We say that $F$ is a \textit{norm} if in addition, $F$ is even, namely, $F(t\xi)=\vert t\vert F(\xi)$ for any $t\neq0$.

Restricting $F$ on $\mfS^{n-1}$, we get $F: \mfS^{n-1}\to \mfR_+$. The Cahn-Hoffman map is given by 
\begin{align*}
    \Phi:\mfS^{n-1}\ra\mfR^n,\quad\Phi(x):=\na F(x),
    %=x-\cos\theta E_n
\end{align*}
here $\na$ denotes the gradient operator in $\mfR^n$.
The image $\Phi(\mfS^{n-1})$ is called the {\it Wulff shape}.

The corresponding \textit{dual gauge } $F^o:\mfR^{n}\ra\mfR$ is defined by
\begin{align*}
    F^o(x)=\sup\left\{\frac{\left<x,z\right>}{F(z)}\Big| z\in\mfS^{n-1}\right\},
\end{align*}
where $\left<\cdot,\cdot\right>$ denotes the standard Euclidean inner product.
The following identities hold true for gauge and its dual gage:
\begin{align}\label{one-homo}
F(\nabla F^o(x))=1, \quad \nabla_\xi F(\nabla_x F^o_\theta(x))=\frac{x}{F^o(x)}.\end{align}
See for example \cite[(2.8)]{DMMN18}, \cite[Lemma 2.2]{CL22}.

Denote $$\mcW=\{x\in\mfR^{n}|F^o(x)<1\}.$$
We call $\mcW$ the \textit{unit Wulff ball centered at the origin}. One can prove that 
$\p \mcW=\Phi(\mfS^{n-1})$, the Wulff shape. More generally, we denote $$\mcW_r(x_0)=r\mcW+x_0$$ and call it the \textit{Wulff ball of radius $r$ centered at $x_0$}. We simply denote $\mcW_r=\mcW_r(0)$.

%the \textit{unit
%  Wulff shape} $\mcW_F$ can be interpreted by $F^o$ as $$\mcW_F=\{x\in\mbR^{n}|F^o(x)=1\}.$$

Let $\S\subset \mfR^n$ be an open convex cone with vertex at the origin, given by
\begin{align*}
    \S=\{tx: x\in\om, t\in(0,+\infty)\}
\end{align*}
for some open domain $\om\subseteq\mfS^{n-1}$.
The corresponding \textit{Wulff sector in $\S$} is $\mcW\cap \S$.
%  In terms of the convex cone $\S$, we denote by $\mfW_F\subset\S$ the sector-like domain such that it is bounded and $\p\mfW_F\cap\S=\mcW_F\cap\S$.
 % We call $\mfW_F$ the \textit{unit Wulff sector} and denote by $\kappa_n$ its volume (namely, $\kappa_n=\vert \mfW_F\vert$). In particular, if $F$ is the Euclidean norm, we denote by $\om_n$ the volume of $B_1(0)\cap\S$.

%Given a norm $F$ and a convex cone in $\mfR^n$, we define 
For a measurable set $E\subset\rr^n$, the \textit{anisotropic perimeter relative to $\S$} is defined by
\begin{align*}
    P_F(E;\S)=\sup\left\{\int_{E\cap\S}{\rm div}\sigma \rd x: \sigma\in C_0^1(\S;\mfR^n), F^o(\sigma)\leq1\right\}.
\end{align*}
One can check by definition that the quantity $P_F(E;\S)$ is finite if and only if the classical relative perimeter
\begin{align*}
    P(E;\S)=\sup\left\{\int_{E\cap\S}{\rm div}\sigma \rd x: \sigma\in C_0^1(\S;\mfR^n), \vert\sigma\vert\leq1\right\}<\infty.
\end{align*}
In particular, for a set of finite perimeter $E\subset\rr^n$, the anisotropic perimeter (anisotropic surface energy) can be characterized by %(see e.g., \cite{CRS16}, \cite[Section 2.3]{CL22})
\begin{align}\label{eq-P_F}
    P_F(E;\S)=\int_{\p^\ast E\cap \S}F(\nu_E)\rd\mcH^{n-1},
\end{align}
where $\p^\ast E$ is the \textit{reduced boundary} of $E$ and $\nu_E$ is the \textit{measure-theoretic outer unit normal} to $E$.
Note that if $E$ is of $C^1$-boundary
in $\S$, then $\nu_E$ agrees with the classical outer unit normal.

A crucial ingredient for our purpose is the anisotropic isoperimetric inequality in a convex cone given by \cite{CRS16, DPGV22,CL22}.

\begin{theorem}[{\cite[Theorem 1.3]{CRS16}}, {\cite[Theorem 4.2]{DPGV22}}, {\cite[Theorem 2.5]{CL22}}]\label{Thm-CRS16-1-3}
    Let $F$ be a gauge in $\mfR^n$ and $\S$ be an open convex cone with vertex at the origin.
    Then for any measurable set $E\subset\mfR^n$ with $|E\cap \S|<\infty$, there holds
    \begin{align}\label{ineq-anisotropic-isoperimetric}
        \frac{P_F(E;\S)}{\vert E\cap\S\vert^{\frac{n-1}{n}}}\geq \frac{P_F(\mcW;\S)}{\vert \mcW\cap\S\vert^{\frac{n-1}{n}}}.
    \end{align}
    Up to rotations, we may write $\S=\mfR^k\times \tilde{\S}$, where $0\le k\le n$ and $\tilde{\S}\subset \mfR^{n-k}$ is an open convex cone containing no lines. Then equality holds in \eqref{ineq-anisotropic-isoperimetric} if and only if $E$ is a Wulff ball of some radius $r$ centered at $x_0\in \mfR^k\times \{0_{\mfR^{n-k}}\}$.
\end{theorem}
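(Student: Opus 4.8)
The plan is to run the Gromov--Brenier optimal transport argument, adapted to the cone $\S$ and to the (possibly non-even) gauge $F$. Since $P_F(\cdot\,;\S)$ is $(n-1)$-homogeneous, $|\cdot\cap\S|$ is $n$-homogeneous, and $t\S=\S$ for $t>0$, the quotient in \eqref{ineq-anisotropic-isoperimetric} is scaling invariant; so, after a routine approximation, I would assume $E\cap\S$ is bounded with finite perimeter in $\mfR^n$ and, by scaling, that $0<|E\cap\S|=|\mcW\cap\S|$. I would first record the model value $P_F(\mcW;\S)=n\,|\mcW\cap\S|$: on $\p\mcW=\Phi(\mfS^{n-1})$ one has $F^o\equiv1$, so $\nu_{\mcW}=\na F^o/|\na F^o|$ $\mcH^{n-1}$-a.e.; Euler's identity $\langle x,\na F^o(x)\rangle=F^o(x)=1$ together with \eqref{one-homo} gives $\langle x,\nu_{\mcW}\rangle=|\na F^o(x)|^{-1}=F(\na F^o(x))|\na F^o(x)|^{-1}=F(\nu_{\mcW})$; and since $\langle x,\nu_{\S}\rangle=0$ $\mcH^{n-1}$-a.e.\ on $\p\S$ (a convex cone with vertex at the origin lies inside the halfspace cut out by any of its supporting hyperplanes, all of which pass through $0$), the divergence theorem on $\mcW\cap\S$ yields $P_F(\mcW;\S)=\int_{\p^\ast\mcW\cap\S}\langle x,\nu_{\mcW}\rangle\,\rd\mcH^{n-1}=\int_{\mcW\cap\S}\div(x)\,\rd x=n\,|\mcW\cap\S|$.

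Next I would take a convex function $\varphi$ on $\mfR^n$ whose gradient $T=\na\varphi$ is the Brenier map transporting $|\mcW\cap\S|^{-1}\mathbf 1_{E\cap\S}\,\rd x$ to $|\mcW\cap\S|^{-1}\mathbf 1_{\mcW\cap\S}\,\rd x$. Then $T$ is bounded (valued in $\overline{\mcW\cap\S}$) and of bounded variation on $E\cap\S$, $D^2\varphi$ is a nonnegative symmetric matrix-valued measure, and its density $A:=D^2_{ac}\varphi$ solves the Monge--Amp\`ere equation $\det A=1$ a.e.\ on $E\cap\S$, whence $\div_{ac}T=\operatorname{tr}A\ge n(\det A)^{1/n}=n$ a.e.\ by the arithmetic--geometric mean inequality. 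Because $F$ is the support function of $\overline{\mcW}$, $T\in\overline{\mcW}$ forces $\langle T,\nu\rangle\le F(\nu)$ for every unit vector $\nu$; because $T\in\overline{\S}$ and $\S$ is a convex cone with vertex $0$, $\langle T,\nu_{\S}\rangle\le0$ $\mcH^{n-1}$-a.e.\ on $\p\S$. The Gauss--Green formula on the finite-perimeter set $E\cap\S$ then gives
\begin{align*}
n\,|E\cap\S|&\le\int_{E\cap\S}\div_{ac}T\le\int_{E\cap\S}\div T\\
&=\int_{\p^\ast E\cap\S}\langle T,\nu_{E}\rangle\,\rd\mcH^{n-1}+\int_{E\cap\p\S}\langle T,\nu_{\S}\rangle\,\rd\mcH^{n-1}\\
&\le\int_{\p^\ast E\cap\S}F(\nu_{E})\,\rd\mcH^{n-1}=P_F(E;\S),
\end{align*}
the second inequality because the singular part of $\div T$ is a nonnegative measure. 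Together with the normalization and the model value, this is precisely \eqref{ineq-anisotropic-isoperimetric}.

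For the equality case I would trace back every inequality: $\operatorname{tr}A=n$ and $\det A=1$ force $A=\mathrm{Id}$ a.e.; the singular part of $\div T$ vanishes, and since $D^2\varphi\ge0$ this kills the entire singular part of $D^2\varphi$, so the restriction of $D^2\varphi$ to $E\cap\S$ equals $\mathrm{Id}\cdot\mcL^n$; and $\langle T,\nu_{\S}\rangle=0$ $\mcH^{n-1}$-a.e.\ on $E\cap\p\S$. From the first two facts, together with a strict-superadditivity argument (apply the inequality to each piece, using that $t\mapsto t^{(n-1)/n}$ is strictly superadditive for $n\ge2$) ruling out several pieces, $E\cap\S$ agrees up to a Lebesgue-null set with a single translate $(\mcW\cap\S)+b$ and $T=\mathrm{id}+b$ there. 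Finally, writing $\S=\mfR^k\times\tilde\S$ with $\tilde\S\subset\mfR^{n-k}$ a cone containing no line, the outward normals to $\p\S$ span $\{0_{\mfR^k}\}\times\mfR^{n-k}$; since $\langle x,\nu_{\S}\rangle=0$ on $\p\S$, the identity $\langle T,\nu_{\S}\rangle=\langle x+b,\nu_{\S}\rangle=0$ forces $b\in\mfR^k\times\{0_{\mfR^{n-k}}\}$, which is exactly a direction preserving $\S$; hence $E\cap\S=(\mcW+b)\cap\S=\mcW_1(b)\cap\S$ with $b\in\mfR^k\times\{0\}$, and undoing the scaling gives the claimed Wulff ball, while the converse is the model-value computation of the first paragraph.

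I expect the equality analysis to be the main obstacle: rigorously excluding a singular part of $D^2\varphi$ and upgrading the a.e.\ identity $A=\mathrm{Id}$ to an honest affine description of $T$ on the a priori merely finite-perimeter set $E\cap\S$ is delicate (this is the point handled, in the Euclidean-cone and anisotropic settings respectively, by Figalli--Indrei and by Figalli--Maggi--Pratelli), as is the bookkeeping that turns $\langle T,\nu_{\S}\rangle=0$ on $\p\S$ into the constraint $b\in\mfR^k\times\{0\}$. A clean alternative for the inequality itself, which is the route of \cite{CRS16}, is Cabr\'e's ABP / sliding-paraboloid method: solve the linear Neumann problem $\Delta u=P_F(E;\S)/|E\cap\S|$ in $E\cap\S$ with $\p_\nu u=F(\nu)$ on $\p^\ast E\cap\S$ and $\p_{\nu_{\S}}u=0$ on $E\cap\p\S$, show that $\na u$ maps the lower contact set onto $\mcW$, and conclude via $\det D^2u\le(\Delta u/n)^n$; there the equality discussion is of comparable difficulty.
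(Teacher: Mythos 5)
The paper itself offers no proof of this theorem; \cref{Thm-CRS16-1-3} is imported verbatim from the cited literature, with the remark immediately after it explaining that the inequality is due to \cite{CRS16}, the equality characterization to \cite{DPGV22} (following \cite{FI13}), and the extension from norms to general gauges to \cite{CL22}. Your write-up therefore cannot be compared against an in-paper argument; instead it reconstructs the Gromov--Brenier optimal-transport proof, which is the route of the Figalli--Indrei / Figalli--Maggi--Pratelli school and is consistent with \cite{DPGV22}. The outline is sound: the model-value identity $P_F(\mcW;\S)=n|\mcW\cap\S|$ via Euler's relation, the Brenier map $T=\na\varphi$ with $\det D^2_{ac}\varphi=1$, the AM--GM bound $\operatorname{tr}A\ge n$, the two pointwise constraints $\langle T,\nu\rangle\le F(\nu)$ (support function of $\overline\mcW$) and $\langle T,\nu_\S\rangle\le 0$, and the Gauss--Green chain all line up correctly and do use that $F$ is a gauge rather than a norm only through the support-function identity, which survives the loss of evenness. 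Your self-assessment of the delicate points in the equality analysis (killing the singular part of $D^2\varphi$, upgrading $A=\mathrm{Id}$ a.e.\ to an affine $T$ on a merely finite-perimeter set, converting $\langle T,\nu_\S\rangle=0$ on $\p\S$ into $b\in\mfR^k\times\{0\}$) is accurate; these are precisely the steps for which one ultimately leans on the cited references. Two small slips, neither fatal: if $T=\mathrm{id}+b$ carries $E\cap\S$ onto $\mcW\cap\S$ then $E\cap\S=(\mcW\cap\S)-b$, not $+b$; and $t\mapsto t^{(n-1)/n}$ is strictly \emph{sub}additive (strictly concave through $0$), which is what the several-pieces argument needs. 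You also correctly flag the ABP/sliding-paraboloid alternative as the actual route of \cite{CRS16}, so the comparison to the sources is well calibrated.
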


\begin{remark}
\normalfont
In \cite{CRS16}, a more general weighted anisotropic isoperimetric inequality in a convex cone has been proved, although without equality characterization. For unweighted case, the equality has been  characterized in \cite{DPGV22}, following the method of \cite{FI13}.
The original statement in \cite[Theorem 4.2]{DPGV22} is stated for norms. Nevertheless, their proof works without change for general gauges, see \cite[Theorem 2.5]{CL22}.
\end{remark}

%--------------
\subsection{Convex symmetrization in a convex cone}\label{subsec-2-2}\ 

Let $u:\S\to (-\infty, 0]$ be a non-positive measurable function, which vanishes at infinity, in the sense that the \textit{distribution function} 
$$\mu(t)=\left\vert\{x\in\S:
u(x)<t\right\vert\}$$
 is finite for all $t<0$.
%We denote by  the \textit{distribution function} of $u$, which measures the volume of the sub level-set of $-\vert u\vert$.
It is clear that $\mu$
is increasing from $\mu(-\infty)=0$
to $\mu(0)$. For simplicity, we abbreviate the set $\{x\in\S:u(x)<t\}$ simply by $\{u<t\}$. %=\vert{\rm spt}u\vert$. 

The \textit{increasing rearrangement} of $u$ is denoted by $u_\ast:[0,\infty]\ra[-\infty,0]$, and is defined by
\begin{align*}
    u_\ast(s)=\sup\{t\leq0:\mu(t)<s\}.
\end{align*}
The \textit{convex symmetrization of $u$ in $\S$} is given by
\begin{align*}
    (u_\star)_{F,\S}(x):=u_\ast(\kappa_{F,\S}(F^o(x))^n),
\end{align*}
where $\kappa_{F,\S}=|\mcW\cap \S|$. For simplicity, we omit the subscript $(F,\S)$ and denote $$u_\star:= (u_\star)_{F,\S}, \quad \kappa=\kappa_{F,\S}.$$

\begin{remark}
\normalfont
When $F$ is the Euclidean norm, the corresponding relative Schwarz symmetrization in $\S$, which we shall denote by $u_\#$ below, has been considered in \cite{PT86} and \cite{LP90}. On the other hand, When $F$ is a norm and $\S=\mfR^n$, the corresponding  convex symmetrization  has been considered in \cite{AFT97}.
\end{remark}

%------------------------
%\section{Convex rearrangement in a convex cone}

%--------
%\subsection{P\'olya-Szeg\"o inequality}
We first prove the P\'olya-Szeg\"o principle for the convex symmetrization in a convex cone.
\begin{theorem}
    [P\'olya-Szeg\"o principle in a convex cone]\label{Prop-PolyaSzego-ani}
    Let  $p\geq1$ and $u\in W^{1, p}(\S)$ be a non-positive function which vanishes at infinity.
    Then $u_\star$ {is in the same function space as $u$} and the following holds:
    \begin{align}\label{ineq-Polya-Szego-ani}
        \int_{\S}F^p(\na u)\rd x
        \geq\int_{\S}F^p(\na u_\star) \rd x.
    \end{align}
\end{theorem}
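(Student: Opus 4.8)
The plan is to follow the classical strategy for the P\'olya--Szeg\"o inequality, adapted to the anisotropic relative setting, relying on the anisotropic isoperimetric inequality in a convex cone (\cref{Thm-CRS16-1-3}) as the single geometric input. The key point is that the coarea formula reduces the statement to a level-by-level comparison, where the isoperimetric inequality and H\"older's inequality combine to give the gradient bound.

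First I would reduce to the case $p>1$ and $u$ smooth with compactly supported level sets, recovering the general case by approximation and monotone/dominated convergence; the non-positivity of $u$ and the hypothesis that $u$ vanishes at infinity guarantee that each superlevel set $\{u<t\}$ for $t<0$ has finite measure, so the distribution function $\mu(t)$ is well-defined and finite. Next I would apply the coarea formula to write
\begin{align*}
  \int_\S F^p(\na u)\,\rd x=\int_{-\infty}^0\left(\int_{\{u=t\}\cap\S}\frac{F^p(\na u)}{|\na u|}\,\rd\mcH^{n-1}\right)\rd t.
\end{align*}
On the level set $\{u=t\}$ the outer normal of $\{u<t\}$ is $\nu=\na u/|\na u|$, so by H\"older's inequality (with exponents $p$ and $p/(p-1)$) applied to the pair $F(\nu)^{1/p}\cdot(F(\na u)/|\na u|^{1/p})$ against the measure $\rd\mcH^{n-1}$ on $\{u=t\}\cap\S$, one gets
\begin{align*}
  \left(\int_{\{u=t\}\cap\S}F(\nu)\,\rd\mcH^{n-1}\right)^p\le \left(\int_{\{u=t\}\cap\S}\frac{F^p(\na u)}{|\na u|}\,\rd\mcH^{n-1}\right)\left(\int_{\{u=t\}\cap\S}\frac{1}{|\na u|}\,\rd\mcH^{n-1}\right)^{p-1}.
\end{align*}
The left-hand side is $P_F(\{u<t\};\S)^p$, and by \cref{Thm-CRS16-1-3} this is bounded below by $(P_F(\mcW;\S)/\kappa^{\frac{n-1}{n}})^p\,\mu(t)^{\frac{(n-1)p}{n}}$. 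Meanwhile $\int_{\{u=t\}}|\na u|^{-1}\rd\mcH^{n-1}=-\mu'(t)$ for a.e.\ $t$ (this requires the standard absolute continuity argument for $\mu$, which uses $p>1$ to rule out a singular part). Combining, I obtain the pointwise-in-$t$ inequality
\begin{align*}
  \int_{\{u=t\}\cap\S}\frac{F^p(\na u)}{|\na u|}\,\rd\mcH^{n-1}\ge \left(\frac{P_F(\mcW;\S)}{\kappa^{\frac{n-1}{n}}}\right)^p\mu(t)^{\frac{(n-1)p}{n}}\big(-\mu'(t)\big)^{1-p}.
\end{align*}

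Then I would carry out the same computation for $u_\star$: because the level sets of $u_\star$ are exactly Wulff sectors $\mcW_r\cap\S$, for which equality holds in the isoperimetric inequality and for which $|\na u_\star|$ is constant on each level set (as $u_\star=u_\ast\circ(\kappa (F^o)^n)$ and the co-level structure is radial with respect to $F^o$), \emph{all} the inequalities above become equalities. Since $u$ and $u_\star$ have the same distribution function $\mu$ by construction, the right-hand sides match, so
\begin{align*}
  \int_{\{u=t\}\cap\S}\frac{F^p(\na u)}{|\na u|}\,\rd\mcH^{n-1}\ge \int_{\{u_\star=t\}\cap\S}\frac{F^p(\na u_\star)}{|\na u_\star|}\,\rd\mcH^{n-1}
\end{align*}
for a.e.\ $t<0$; integrating in $t$ via the coarea formula gives \eqref{ineq-Polya-Szego-ani}. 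To see $u_\star\in W^{1,p}(\S)$, the finiteness of the right-hand side together with an explicit computation of $F^p(\na u_\star)$ in terms of $\mu$ and its derivative (or a lower-semicontinuity/approximation argument) gives membership in the same Sobolev space.

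The main obstacle is the analytic regularity of the distribution function $\mu$ — establishing that $\mu$ is absolutely continuous, that $-\mu'(t)=\int_{\{u=t\}}|\na u|^{-1}\rd\mcH^{n-1}$ for a.e.\ $t$, and that the Sard-type set where $\na u=0$ on a level set of positive measure does not contribute — and the careful handling of degenerate level sets where $\na u$ vanishes; these are exactly the technical points treated in \cite{AFT97}, and the argument there goes through verbatim since it is purely local and does not use evenness of $F$ or $\S=\mfR^n$. The only genuinely new ingredient, the isoperimetric inequality in the cone with the non-even gauge $F_\theta$, is already supplied by \cref{Thm-CRS16-1-3}; the non-positivity hypothesis on $u$ is what makes the one-sided rearrangement $u_\ast$ (and hence $u_\star$) well-defined and consistent with the cone structure.
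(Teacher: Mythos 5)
Your proposal follows essentially the same route as the paper: coarea formula, level-by-level H\"older inequality, anisotropic isoperimetric inequality in the cone, and the observation that all inequalities become equalities for $u_\star$. Two small slips worth flagging: with the paper's convention $\mu(t)=|\{u<t\}|$ for non-positive $u$, $\mu$ is increasing, so the coarea identity reads $\mu'(t)=\int_{\{u=t\}}|\nabla u|^{-1}\,\rd\mcH^{n-1}$ (no minus sign; your $(-\mu'(t))^{1-p}$ would be a power of a negative number). More substantively, the equality case of H\"older for the symmetrized function is \emph{not} that $|\nabla u_\star|$ is constant on level sets --- that fails for non-Euclidean gauges --- but rather that $F(\nabla u_\star)$ is constant on level sets, which holds because $u_\star$ is a function of $F^o(x)$ and $F(\nabla F^o)\equiv 1$; this is exactly the point the paper makes, and it is what makes the argument work for a non-even gauge such as $F_\theta$.
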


\begin{proof}
The proof follows closely that of \cite{AFT97}.

We first assume $u\in C^{\infty}$. By Sard's theorem, $\{u=t\}$ is regular hypersurface for a.e. $t<0$.
The co-area formula gives
\begin{align*}
    \mu(t)=\int_{\{u<t\}}1\rd x
    =\int^t_{-\infty}\left(\int_{\{u=r\}}\frac{1}{\vert\na u\vert}d\mcH^{n-1}\right)\rd r.
\end{align*}
It follows that for a.e. $t<0$, there holds
\begin{align}\label{eq-derivative-mu}
    \frac{\rd}{\rd t}\mu(t)=\int_{\{u=t\}}\frac{1}{\vert\na u\vert}\rd\mcH^{n-1}.
\end{align}
Using the co-area formula again, for a.e. $t<0$, there holds
\begin{align}\label{eq-AFT97-3-1}
    \frac{\rd}{\rd t}\int_{\{u<t\}}F^p(\na u) \rd x
    =\int_{\{u=t\}}\frac{F^p(\na u)}{\vert\na u\vert}\rd\mcH^{n-1}.
\end{align}
Applying the H\"older inequality, we get
\begin{align}\label{ineq-Holder}
    \int_{\{u=t\}}F(\na u)\vert\na u\vert^{-1}\rd\mcH^{n-1}
    \leq\left(\int_{\{u=t\}}\frac{F^p(\na u)}{\vert\na u\vert}\right)^{1/p}
    \left(\int_{\{u=t\}}\frac{1}{\vert\na u\vert}\right)^{1-1/p}
\end{align}
Substituting \eqref{eq-derivative-mu} and \eqref{eq-AFT97-3-1} into \eqref{ineq-Holder}, we obtain
\begin{align}\label{ineq-AFT97-3-2.5}
    \int_{\{u=t\}}\frac{F^p(\na u)}{\vert\na u\vert}\rd\mcH^{n-1}
    \geq\left(\int_{\{u=t\}}F(\frac{\na u}{\vert\na u\vert})\rd\mcH^{n-1}\right)^p\left(\mu'(t)\right)^{1-p}.
\end{align}
Notice that for a.e. $t$, the outward unit normal of $\{u<t\}$ along the boundary $\{u=t\}$ is given by $\nu=\frac{\na u}{\vert\na u\vert}$,
taking \eqref{eq-P_F}, the anisotropic isoperimetric inequality \eqref{ineq-anisotropic-isoperimetric} and also \eqref{eq-AFT97-3-1} into account, integrating \eqref{ineq-AFT97-3-2.5} over $(-\infty,0)$, we arrive at
\begin{align}\label{ineq-AFT97-3-3}
    \int_{\S}F^p(\na u)\rd x
    \geq\int^0_{-\infty}(\mu'(t))^{1-p}\left(n\kappa^{1/n}\mu(t)^{1-1/n}\right)^p\rd t.
\end{align}
It suffice to verify that the RHS of \eqref{ineq-AFT97-3-3} coincides with $\int_{\S}F^p(\na u_\star)\rd x$.

We proceed by noticing that $u_\star$ is anisotropic symmetric, radially increasing,
and hence the sub level-sets of $u_\star$ are homothetic to the unit Wulff sector centered at the origin. This means, the anisotropic isoperimetric inequality holds as an equality for the sets $\{u_\star<t\}$, namely,
\begin{align*}
    \int_{\{u_\star=t\}}F(\frac{\na u_\star}{\vert\na u_\star\vert})\rd\mcH^{n-1}
    =n\kappa^{1/n}\mu(t)^{1-1/n}.
\end{align*}
{On the other hand, the H\"older inequality in \eqref{ineq-Holder} also holds as an equality when $u=u_\star$. This is because $F(\na u_\star)$ is constant along the level-sets $\{u_\star=t\}$, which is due to the fact that $F(\nabla F^o(x))=1$.}

The proof is done by repeating the argument above and noticing that every inequality indeed holds as an equality for $u_\star$. In particular, one gets
\begin{align}\label{eq-AFT97-3-5}
    \int_{\S}F^p(\na u_\star)\rd x
    =&\int^0_{-\infty}\left(\int_{\{u_\star=t\}}\frac{F^p(\na u_\star)}{\vert\na u_\star\vert}\right)\rd t\notag\\
    =&\int^0_{-\infty}(\mu'(t))^{1-p}\left(n\kappa^{1/n}\mu(t)^{1-1/n}\right)^p\rd t.
\end{align}
We complete the proof for $u\in C^\infty$.
The general case $u\in W^{1,p}$ follows from a standard density  argument.
\end{proof}
%If, instead, $u$ is a non-negative function on $\S$, then we see that: for $t<0$, $\mu(t)=\vert\{x\in\S:u(x)>-t\}\vert$ measures the volume of the super level-set of $u$, and the outer unit normal in this case is given by $\nu=-\frac{\nabla u}{\vert \nabla u\vert}$ when it exists.
%Applying the same approach of \cref{Prop-PolyaSzego-ani}, we obtain
%\begin{corollary}\label{Cor-PolyaSzego-ani}
%Let $u=\tilde u\mid_\S$, where $\tilde u\in W^{1,p}_0(\mfR^n)$, $p\geq1$.
%    If $u$ is non-negative,
%    then $u_\star$ {\color{purple}is in the same function space as $u$} and
%   \begin{align}
 %       \int_{\S}F^p(-\na u)dx
 %       \geq\int_{\S}F^p(\na u_\star) dx.
 %   \end{align}
%\end{corollary}
%Denote $$u^+(x)=\max\{u(x),0\},u^-(x)=\max\{-u(x),0\}.$$
%Moreover, if $u\in W_0^{1,p}(\mfR^n)$, then $u^+,u^-\in W_0^{1,p}(\mfR^n)$,
%by combining \cref{Prop-PolyaSzego-ani} with \cref{Cor-PolyaSzego-ani},, we have

%The following follows directly from \cref{Prop-PolyaSzego-ani}. 
%\begin{corollary}
% Let  $p\geq1$ and $u\in W^{1, p}(\S)$ be a function which vanishes at infinity.
 %   Then $u_\star$ {is in the same function space as $u$} and
%    \begin{align*}
%        \int_{\S}F^p\left(-\na (u^++u^-)\right)dx
%        \geq\int_{\S}F^p(\na (-u^+)_\star)+F^p(\na (-u^-)_\star) dx.
%    \end{align*}
%\end{corollary}
 
%\begin{remark}
 %   On the other hand, I think, as for the non-negative function $u$, we can use the classical {\bf decreasing rearrangement}
%($u^\ast$ and $u^\star$) to find that
%\begin{align*}
%    \int_\S F^p(-\na u)dx
%    \geq\int_\S
%    F^p(-\na u^\star)dx.
%\end{align*}}
%\end{remark}}

%--------
\subsection{PDE Comparison Principle}\

Let $\S$ be a open convex cone such that $\p\S\setminus\{0\}$ is smooth. Let $\Om\subset\S$ be a bounded domain such that $\Gamma:=\overline{\p\Om\cap\S}$, the topological closure of $\p\Om\cap\S$ in $\mfR^n$, is a smooth hypersurface with boundary and $\Gamma_1:=\p\O\setminus\Gamma$.
We always assume that $\mcH^{n-1}(\Gamma_1)>0$, and $\mcH^{n-1}(\Gamma)>0$. Such a domain is called a \textit{sector-like} domain.
We use $\nu$ to denote the outward unit normal of $\p\Om$, when it exists.

We consdier the following  mixed boundary value problem for elliptic equations of divergence type in $\O$.
\begin{align}\label{eq-M-ani}
	\begin{cases}
		-{\rm div}\left(a(x,u,\na u)\right) =f\quad&\text{in }\Om,\\
		u=0\quad&\text{on }\Gamma,\\
		-a(x,u,\na u)\cdot \nu=0\quad&\text{on }\Gamma_1,\tag{M}
	\end{cases}
\end{align}
where \begin{align}\label{property-f}
    f\le 0, f\in L^{\frac{2n}{n+2}}\hbox{ if } n\ge 3\hbox{ and } f\in L^p (p>1) \hbox{ if }n=2.
\end{align}
$a(x,\eta,\xi)=\{a_i(x,\eta,\xi)\}_{i=1,\cdots, n}$ are Carath\'eodory functions satisfying:
	\begin{align}\label{condition-elliptic-ani}
		a(x,\eta,\xi)\cdot\xi
		\geq F^2(\xi),\quad\text{for a.e. }x\in\Om,\text{ }\eta\in\mfR,\text{ }\xi\in\mfR^n.
	\end{align}

We write $W_0^{1,2}(\Om;\Gamma)$ to be the space of functions lying in $W^{1,2}(\Om)$ which has vanishing trace on $\Gamma=\overline{\p\Om\cap \S}$. $u\in W_0^{1,2}(\Om;\Gamma)$ is said to be a weak solution of \eqref{eq-M-ani} if it satisfies
\begin{align}\label{eq-weaksol-ani}
	\int_\Om\left(a(x,u,\na u)\cdot \na v\right)\rd x
	=\int_\Om fv\rd x,\quad\forall v\in W^{1,2}_0(\Om;\Gamma).
\end{align}
In the case  $a(x,\eta,\xi)=\frac12F^2(\xi)$, we denote $$\Delta_F u= {\rm div}[\nabla_\xi(\frac12F^2)(\nabla u)].$$

The aim of this subsection is to establish a comparison principle for \eqref{eq-M-ani}. Let $\Om_\star$ be the Wulff sector centered at the origin with the same volume as $\O$ and  $\Gamma_\star=\overline{\p \Om_\star\cap \S}$ and $(\Gamma_1)_\star=\p \S\setminus \Gamma_\star$.
\begin{theorem}\label{Thm-rearrangement-ani}
	Let $u\in W^{1,2}_0(\Om;\Gamma)$ be a solution to \eqref{eq-M-ani}. If $z\in W_0^{1,2}(\Om_\star;\Gamma_\star)$ is the solution of the following mixed boundary value problem
	\begin{align}\label{eq-MBP-z-ani}
		\begin{cases}
			-\De_F z=f_\star\quad&\text{in }\Om_\star,\\
			z=0\quad&\text{on }\Gamma_\star,\\
			\na_\xi(\frac12F^2)(\na z)\cdot\nu=0\quad&\text{on }(\Gamma_1)_\star,
		\end{cases}
	\end{align}
	then 
	\begin{align}\label{ineq-ustar-z-ani}
	0\ge u_\star(x)\geq z(x)\quad\text{for any }x\in\Om_\star.
	\end{align}
	%In other words, among all types of mixed boundary equations in the form of \eqref{eq-M-ani}, the standard type \eqref{eq-MBP-z-ani} gives the smallest solution; among all types of domain $\Om$ in the convex cone $\S$, Wulff sector minimizes the solution to \eqref{eq-M-ani}.
\end{theorem}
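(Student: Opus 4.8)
The plan is to follow the classical Talenti scheme, adapted to the anisotropic mixed boundary value problem in the sector-like domain $\Om$. First I would set up the test-function machinery: for $t<0$ and small $h>0$, take the Lipschitz truncation $v_{t,h}$ of $u$ that equals $0$ where $u\le t$, equals $h$ where $u\ge t+h$, and interpolates linearly in between; since $u$ vanishes on $\Gamma$ and $t<0$, this $v_{t,h}$ lies in $W^{1,2}_0(\Om;\Gamma)$ and is an admissible test function in the weak formulation \eqref{eq-weaksol-ani}. Plugging it in and dividing by $h$, then letting $h\to0$, gives the fundamental identity
\begin{align*}
    \int_{\{u=t\}}\frac{a(x,u,\na u)\cdot\na u}{\vert\na u\vert}\,\rd\mcH^{n-1}
    =-\int_{\{u<t\}}f\,\rd x,
\end{align*}
where the right-hand side is nonnegative because $f\le0$. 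Using the ellipticity \eqref{condition-elliptic-ani} on the left and the co-area formula, together with $(-\mu)'(t)=\int_{\{u=t\}}\vert\na u\vert^{-1}\rd\mcH^{n-1}$, and the anisotropic relative isoperimetric inequality \eqref{ineq-anisotropic-isoperimetric} applied to the level set $\{u<t\}$, one obtains the differential inequality
\begin{align*}
    \left(n\kappa^{1/n}\mu(t)^{1-1/n}\right)^2
    \le \left(-\int_{\{u<t\}}f\,\rd x\right)\mu'(t),
\end{align*}
exactly as in the Pólya–Szegő argument above but now with the forcing term carried along (here one also uses the Hölder inequality to pass from $\int F(\na u/\vert\na u\vert)$ to the isoperimetric quotient, and $F^2\le a\cdot\na u$).

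Next I would rewrite this in terms of the rearranged variable. Setting $s=\mu(t)$ and letting $\tilde u(s)=u_\ast(s)$ be the (generalized) inverse, the inequality becomes, after dividing by $\mu'(t)$ and using $\tilde u'(s)=1/\mu'(t)$,
\begin{align*}
    -\frac{\rd \tilde u}{\rd s}(s)\le \frac{1}{\left(n\kappa^{1/n}s^{1-1/n}\right)^2}\int_0^s (-f)_{\ast}(\sigma)\,\rd\sigma
    \le \frac{1}{\left(n\kappa^{1/n}s^{1-1/n}\right)^2}\int_0^s f_{\ast\ast}\,\rd\sigma,
\end{align*}
where I bound $-\int_{\{u<t\}}f\,\rd x\le\int_0^{\mu(t)}(-f)^*$ by the Hardy–Littlewood inequality and $(-f)^*=f_\star$ in the rearrangement sense used in the paper. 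For the model problem \eqref{eq-MBP-z-ani}, the solution $z$ is anisotropic-symmetric (its level sets are Wulff sectors, by the symmetry of $\De_F$ and $f_\star$ under the cone's symmetry), so one can compute its rearrangement explicitly: writing $z(x)=\zeta(\kappa(F^o(x))^n)$ one solves an ODE and finds that the analogous inequality holds as an \emph{equality},
\begin{align*}
    -\frac{\rd\zeta}{\rd s}(s)=\frac{1}{\left(n\kappa^{1/n}s^{1-1/n}\right)^2}\int_0^s f_\star(\sigma)\,\rd\sigma.
\end{align*}
Here the key point is that all the inequalities used for $u$ — ellipticity, Hölder, isoperimetry — are equalities for the radial model, mirroring the Pólya–Szegő discussion in the proof of \cref{Prop-PolyaSzego-ani}.

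Finally I would integrate. Both $u_\ast$ and $\zeta$ vanish at $s=\kappa(F^o)^n$ corresponding to $s=|\Om|=|\Om_\star|$ (that is, $\tilde u(|\Om|)=u_\ast(|\Om|)=0$ and $\zeta(|\Om_\star|)=0$, since $u=0$ on $\Gamma$ and $z=0$ on $\Gamma_\star$), so integrating the inequality $-\tilde u'\le-\zeta'$ from $s$ to $|\Om|$ gives $\tilde u(s)\ge\zeta(s)$ for all $s$, i.e. $u_\star(x)\ge z(x)$ pointwise on $\Om_\star$; the upper bound $u_\star\le0$ is immediate from $u\le0$. I expect the main obstacle to be the careful justification of the boundary term vanishing on $\Gamma_1$ (resp. $(\Gamma_1)_\star$) and of the test-function limit in the sector-like setting — one must check that the truncations $v_{t,h}$ are genuinely admissible (vanishing trace on $\Gamma$ only, with no constraint on $\Gamma_1$, which is exactly why the Neumann-type condition $-a\cdot\nu=0$ on $\Gamma_1$ is imposed) and that the Neumann condition makes the relative perimeter $P_F(\{u<t\};\S)$ — not the full perimeter — the right quantity to feed into the relative isoperimetric inequality \eqref{ineq-anisotropic-isoperimetric}. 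A secondary technical point is the regularity/Sard-type reduction allowing the co-area manipulations for a.e. $t$, and the density argument to remove any extra smoothness assumed on $u$; these are routine but must be stated.
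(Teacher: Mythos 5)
Your overall strategy is the paper's strategy: the Talenti scheme combining (i) the level-set differential inequality from co-area, Cauchy--Schwarz and the relative anisotropic isoperimetric inequality (the paper's Claim 1), (ii) the forcing estimate from testing with truncations and Hardy--Littlewood (Claim 2), and (iii) integration of the resulting ODE inequality and identification of the right-hand side with the radial model. The skeleton matches.

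However, the test function you propose is not admissible as written. You take $v_{t,h}=0$ where $u\le t$ and $v_{t,h}=h$ where $u\ge t+h$. On $\Gamma$ one has $u=0$, and since $t<0$ and $h$ is small, $0\ge t+h$; hence $v_{t,h}=h\ne 0$ on $\Gamma$, so $v_{t,h}\notin W^{1,2}_0(\Om;\Gamma)$, contrary to your assertion. Moreover $v_{t,h}\ge 0$ while $f\le 0$, so $\int_\Om f v_{t,h}\le 0$, whereas the left side $\int_{\{t<u<t+h\}}a\cdot\na u\ge \int_{\{t<u<t+h\}}F^2(\na u)\ge 0$; feeding this test function into \eqref{eq-weaksol-ani} would force $\na u=0$ a.e.\ on the level band, which is false. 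The correct choice is $v_{t,h}-h$ (equivalently, the paper's truncation, which equals $-h$ on $\{u<t-h\}$, $u-t$ on $\{t-h<u<t\}$ and $0$ on $\{u\ge t\}$): it has vanishing trace on $\Gamma$, is non-positive, and produces the identity you want. Once that is fixed, your derivation goes through.

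There is also a structural difference at the end that is worth noting. You identify the right-hand side of the integrated inequality with $z_\ast(s)$ by asserting that the radial anisotropic model turns every inequality into an equality and then ``solving an ODE.'' The paper instead takes a detour through the Euclidean model: it shows the right-hand side equals $\frac{\om^{2/n}}{\kappa^{2/n}}v_\ast(s)$, where $v$ solves the isotropic mixed problem on the Schwarz-symmetrized sector $\Om_\#$, and then proves $\frac{\om^{2/n}}{\kappa^{2/n}}v = z_\#$ by a variational argument (Claim 3): $z_\star$ minimizes $\mathcal F$, and the Euclidean P\'olya--Szeg\"o principle shows $z_\#$ minimizes the symmetrized energy $\mathcal F_\#$, which pins $z_\#$ down as a constant multiple of $v$. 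Your more direct route is fine, but it silently relies on two facts that must be made explicit: that $z$ is radially symmetric with respect to $F^o$ (the paper supplies this separately, via the observation that any such radial function automatically satisfies the conormal condition on $(\Gamma_1)_\star$, so uniqueness forces radiality), and the actual one-dimensional ODE computation for $\zeta$. Watch also your signs in the rearranged forcing term: since $f\le 0$ one should have $\int_0^s(-f_\ast)\ge 0$ on the right, not $\int_0^s f_\star$.
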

\begin{remark}
\normalfont
One sees that if $z$ is radially symmetric with respect to $F$, namely, $z(x)=\bar z(F^o(x))$ for some one-variable function $\bar z$, then $z$ automatically satisfies $\na_\xi(\frac12F^2)(\na z)\cdot\nu=0$ on $(\Gamma_1)_\star$. Hence it follows from the maximum principle that the solution $z$ is radially symmetric with respect to $F$.
\end{remark}
We first see that the solution to \eqref{eq-weaksol-ani} is non-positive.
\begin{lemma}\label{Lem-nonpositive}
	If $u$ is a weak solution of the mixed boundary equation \eqref{eq-weaksol-ani}, then
$u\leq0$  in $\Om$.	In particular $u_\star\le 0$ in $\Om_\star$ and the weak solution of \eqref{eq-MBP-z-ani}  $z\le 0$ in $\Om_\star$.
\end{lemma}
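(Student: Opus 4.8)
The plan is to test the weak formulation \eqref{eq-weaksol-ani} against the positive part $u^{+} := \max\{u, 0\}$ and to use the ellipticity condition \eqref{condition-elliptic-ani} together with the sign condition $f \le 0$.

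First I would verify that $u^{+}$ is an admissible test function. Since $u \in W^{1,2}_0(\Om;\Gamma)$, the chain rule for Sobolev functions gives $u^{+} \in W^{1,2}(\Om)$ with $\na u^{+} = \mathbf{1}_{\{u > 0\}}\,\na u$ almost everywhere, and the trace of $u^{+}$ on $\Gamma$ still vanishes because that of $u$ does; hence $u^{+} \in W^{1,2}_0(\Om;\Gamma)$. Inserting $v = u^{+}$ into \eqref{eq-weaksol-ani} and using that $\na u^{+}$ is supported on $\{u > 0\}$, the left-hand side equals $\int_{\{u>0\}} a(x,u,\na u)\cdot\na u\,\rd x$, which by \eqref{condition-elliptic-ani} is at least $\int_{\{u>0\}} F^2(\na u)\,\rd x = \int_{\Om} F^2(\na u^{+})\,\rd x \ge 0$; on the right-hand side, $f \le 0$ and $u^{+} \ge 0$ force $\int_{\Om} f u^{+}\,\rd x \le 0$. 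Combining the two gives $\int_{\Om} F^2(\na u^{+})\,\rd x \le 0$.

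Since $F$ is a gauge, $F(\xi) > 0$ for every $\xi \ne 0$, so the previous inequality forces $\na u^{+} = 0$ almost everywhere in $\Om$. As $\mcH^{n-1}(\Gamma) > 0$, the Poincar\'e inequality for functions in $W^{1,2}_0(\Om;\Gamma)$ on the sector-like domain $\Om$ yields $\|u^{+}\|_{L^2(\Om)} \le C\|\na u^{+}\|_{L^2(\Om)} = 0$, hence $u^{+} \equiv 0$, i.e. $u \le 0$ in $\Om$. The bound $u_\star \le 0$ in $\Om_\star$ is then immediate from the construction in \cref{subsec-2-2}, since the rearrangement $u_\ast$ takes values in $[-\infty, 0]$ and $u_\star(x) = u_\ast(\kappa (F^o(x))^n)$; for the same reason $f \le 0$ implies $f_\star \le 0$.

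Finally, for the solution $z$ of \eqref{eq-MBP-z-ani} I would repeat the argument on $\Om_\star$. The corresponding operator is $a(x,\eta,\xi) = \na_\xi(\frac12 F^2)(\xi)$, which is an admissible choice in \eqref{eq-M-ani}: by Euler's identity for the positively $2$-homogeneous function $\frac12 F^2$ one has $\na_\xi(\frac12 F^2)(\xi)\cdot\xi = F^2(\xi)$, so \eqref{condition-elliptic-ani} holds with equality. Since $f_\star \le 0$, testing \eqref{eq-MBP-z-ani} against $z^{+}$ gives $\int_{\Om_\star} F^2(\na z^{+})\,\rd x \le \int_{\Om_\star} f_\star z^{+}\,\rd x \le 0$, and exactly as above $z^{+} \equiv 0$, i.e. $z \le 0$ in $\Om_\star$. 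I do not expect any genuine difficulty here; the only points that deserve a line of justification are the admissibility of $u^{+}$ (and $z^{+}$) as test functions in the trace-constrained space and the Poincar\'e inequality on sector-like domains, both of which are standard consequences of $\mcH^{n-1}(\Gamma) > 0$.
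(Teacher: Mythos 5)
Your proof is correct and follows essentially the same route as the paper: test the weak formulation against $u^{+}$, use the ellipticity bound \eqref{condition-elliptic-ani} and $f\le 0$ to force $\int_\Om F^2(\na u^+)\,\rd x=0$, and conclude from the vanishing trace on $\Gamma$. The extra care you take with the admissibility of $u^+$, the Poincar\'e step, and the separate verification for $z$ and $u_\star$ just spells out what the paper leaves implicit.
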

\begin{proof}
%	Notice that $u$ satisfies
%	\begin{align*}
%		\int_{\Om} a(x, u,\na u)\cdot\na\varphi dx=\int_{\Om}f \varphi dx,\quad\text{for all }\varphi\in W_0^{1,2}(\Om;\Gamma).
%	\end{align*}
By testing the definition of weak solution \eqref{eq-weaksol-ani} with $\varphi=u^+=\max\{0,u\}\in W_0^{1,2}(\Om;\Gamma)$, the ellipticity \eqref{condition-elliptic-ani} of $a$ and the non-positive of $f$ imply
\begin{align*}
	0\geq&\int_{\{u>0\}}f u^+\rd x
	\ge\int_{\{u>0\}}F^2(\na u(x))\rd x=\int_{\Om}F^2(\na u^+(x))\rd x.
\end{align*}
It follows that  $u\leq0$ in $\Om$.
\end{proof}
\begin{proof}[Proof of \cref{Thm-rearrangement-ani}]
	%------------------------modified proof---------
	
	We follow closely the classical proof in \cite{Talenti76}.
	
	{\bf Claim 1. }for any $u\in W_0^{1,2}(\Om;\Gamma)$, the following inequality \begin{align}\label{ineq-Talenti76-(40)}
	n^2\kappa^{2/n}\leq\mu(t)^{-2+2/n}\mu'(t)\left(\frac{\rd}{\rd t}\int_{\{u<t\}}F^2(\na u)\rd x\right)
	\end{align}
holds for a.e. $t<0$.

Indeed, by virtue of the fact that $u\in W_0^{1,2}(\Om;\Gamma)$, we know that $u$ is of bounded variation in $\Om$, so that the co-area theorem for BV functions (see e.g., \cite[Theorem 5.9]{EG15}) gives: the sets $\{u<t\}$ have finite perimeter (whose boundary is then given by $\{u=t\}$ and the outer unit normal is $\frac{\na u}{\vert\na u\vert}$) for a.e. $t$.
We can then use the co-area formula and recalling \eqref{eq-P_F} to see that 
\begin{align*}
	\int_{\{u<t\}}F(\na u)\rd x
	=\int_{-\infty}^t\int_{\{u=s\}}\frac{F(\na u)}{\vert\na u\vert}\rd\mcH^{n-1}\rd s
	=\int_{-\infty}^tP_F(\{u<s\};\S)\rd s,
\end{align*}
which implies: for a.e. $t<0$,
\begin{align*}
	\frac{\rd}{\rd t}\int_{\{u<t\}}F(\na u)\rd x
	=P_F(\{u<t\};\S).
\end{align*}
Using the anisotropic isoperimetric inequality \eqref{ineq-anisotropic-isoperimetric}, we find
\begin{align}\label{ineq-Talenti76-(43)}
	\frac{\rd}{\rd t}\int_{\{u<t\}}F(\na u)\rd x
	\geq n\kappa^{1/n}\mu(t)^{1-1/n}.
\end{align}
On the other hand, writing $\frac{\rd}{\rd t}\int_{\{u<t\}}F(\na u)\rd x$ in the form of differential quotients, we obtain
\begin{align}\label{ineq-Talenti76-(44)}
		\frac{\rd}{\rd t}\int_{\{u<t\}}F(\na u)\rd x
		=&\lim_{h\ra0}\frac{\int_{\{t<u<t+h\}}F(\na u)\rd x}{h}\notag
		\\ \leq&\lim_{h\ra0}\frac{\vert\{ t<u<t+h\}\vert^{1/2}}{h^{1/2}}\frac{\left(\int_{\{t<u<t+h\}}F^2(\na u)\rd x\right)^{1/2}}{h^{1/2}}\notag\\
		=&\left(\mu'(t)\right)^{1/2}\left(\frac{\rd}{\rd t}\int_{\{u<t\}}F^2(\na u)\rd x\right)^{1/2}.
\end{align}
\eqref{ineq-Talenti76-(40)} follows from \eqref{ineq-Talenti76-(43)} and \eqref{ineq-Talenti76-(44)}, which proves {\bf Claim 1}.

{\bf Claim 2.} for any weak solution $u$ to \eqref{eq-M-ani}, the function
\begin{align*}
	\Psi(t):=\int_{\{u<t\}}F^2(\na u)\rd x
\end{align*}
is an increasing function on $-\infty<t<0$, with
\begin{align*}
	0\leq\Psi'(t)\leq\int_0^{\mu(t)}-f_\ast(s)\rd s.
\end{align*}

	For $t\leq0$, by testing \eqref{eq-weaksol-ani} with the following truncated function
	\begin{align*}
		v_h:=
		\begin{cases}
			-h\quad&\text{if }u<t-h,\\
			u-t\quad&\text{if }t-h<u< t,\\
			0\quad&\text{if }u\geq t,
		\end{cases}
	\end{align*}
	we find
	\begin{align*}
		\Psi(t)-\Psi(t-h)=&\int_{\{t-h<u<t\}}F^2(\na u)\rd x
		\leq \int_{\{u<t\}}fv_h\rd x\notag
		\\=&\int_{\{t-h<u<t\}}f\cdot(u-t)\rd x-h\int_{\{u<t-h\}}f\rd x,
		%{\color{purple}+\int_{T\cap\{u<t\}}gv_h},
	\end{align*}
	dividing both sides by $h$ and sending $h\searrow0$, by virtue of the integrability of $f$ and $u$, we thus have: for a.e. $t<0$,
	\begin{align*}
		\frac{\rd}{\rd t}\int_{\{u<t\}}F^2(\na u)\rd x
		=\Psi'(t)
		\leq\int_{\{u<t\}}(-f)\rd x.
		%{\color{purple}-\int_{T\cap\{u<t\}}g	\leq\int_{\{u<t\}}fdx}
		%\leq\int_{\{u<t\}}\vert f\vert dx.
	\end{align*}
	The Hardy-Littlewood inequality yields that
 \begin{align*}
		\int_{\{u<t\}}(-f)\rd x\le \int_0^{\mu(t)}-f_\ast(s)\rd d s.
	\end{align*}  {\bf Claim 2} follows.

	A crucial consequence of {\bf Claim 1} and {\bf Claim 2} is the following inequality:
	
	%----------------------------modified proof--------
	\begin{align*}
		1
		\leq\frac{\mu'(t)}{n^2\kappa^{2/n}\mu(t)^{2-2/n}}\int_0^{\mu(t)}-f_\ast(s)\rd s.
	\end{align*}
	%-----------------------------------------------------------------------------------
	Notice that the RHS is the derivative of an increasing function of $t$.
	Integrating both sides over $(t,0)$, one gets
	\begin{align*}%\label{ineq-AFT-4-3}
		t
		\geq\frac{1}{n^2\kappa^{2/n}}\int_{\mu(t)}^{\vert\Om\vert}r^{-2+2/n}\rd r\int_0^rf_\ast(s)\rd s.
	\end{align*}
	Invoking again the definition of the increasing rearrangement, we thus find
	\begin{align}\label{ineq-ustar-v}
		u_\ast(s)
		\geq\frac{1}{n^2\kappa^{2/n}}\int_s^{\vert\Om\vert}r^{-2+2/n}\rd r\int_0^rf_\ast(s)\rd s.
	\end{align}
	%From this and the definition of the capillary Schwarz symmetrization in a half-space, we indeed have
	%\begin{align}
	%    u^\star(x)=u^\ast\left(C_\theta\vert x-r_u\cos\theta E_n\vert^n\right)
	%    \leq\frac{1}{n^2C_\theta^{2/n}}\int_{C_\theta\vert x-r_u\cos\theta E_n\vert^n}^{\vert\Om\vert}r^{-2+2/n}dr\int_0^rf^\ast(s)ds.
	%\end{align}
	%The proof is done by checking that the RHS equals to the solution of \eqref{eq-M-v}.
	
 %It is easy to recognize that the function on the RHS of \eqref{ineq-AFT-4-3} is proportional to the increasing rearrangement of the solution of a specific type of capillary symmetrized problem. 
 By a standard ODE computation, we know that
 \begin{align*} 
		v_\ast(s)=\frac{1}{n^2\om^{2/n}}\int_s^{\vert\Om\vert}r^{-2+2/n}\rd r\int_0^rf_\ast(s)\rd s
	\end{align*}
	where $\om=|B_1\cap\S|$ and $v_\ast(s)$ is the increasing rearrangement of the solution $v$ of the mixed boundary problem:
	\begin{align}\label{eq-MBP-v}
		\begin{cases}
			\De v=-f_\#\quad\text{in }\Om_\#,\\
			v=0\quad\text{on }\Gamma_\#,\\
			\nabla v\cdot\nu=0\quad\text{on }(\Gamma_1)_\#,
		\end{cases}
	\end{align}
	where $\Om_\#=B_r\cap \S$ for some $r$ such that $\vert\Om_\#\vert=\vert\Om\vert$ and $f_\#$ is the Schwarz symmetrization of $f$  (that is, the convex symmetrization when $F$ is the Euclidean norm).

Hence, \eqref{ineq-ustar-v} can be rewritten as
	\begin{align}\label{ineq-AFT-4-4}
		u_\ast(s)
		\geq\frac{\om^{2/n}}{\kappa^{2/n}}v_\ast(s).
	\end{align}
	%For the sake of clarification, we include the proof and delay it to \cref{App-1}.

	{\bf Claim 3. }For $z=z_\star\in W_0^{1,2}(\Om_\star;\Gamma_\star)$ that solves \eqref{eq-MBP-z-ani}, there holds
	\begin{align*}
		\frac{\om^{2/n}}{\kappa^{2/n}}v(x)=z_\#(x),\quad\text{for }x\in\Om_\#.
	\end{align*}
 Consider the functional 
 \begin{align*}
		\mathcal{F}(w)=\int_{\Om_\star}\left(\frac{1}{2}F^2(\na w)-f_\star w\right)\rd x,\quad\text{for }w\in W_0^{1,2}(\Om_\star;\Gamma_\star).
	\end{align*}
 It is clear that $z=z_\star$ is the minimizer for $\mathcal{F}$, which is non-positive by \cref{Lem-nonpositive} and radially symmetric with respect to $F$. Hence
  \begin{align*}
		\mathcal{F}(z)=\mathcal{F}(z_\star)=\int_{\Om_\#}\frac{1}{2}\frac{\kappa^{2/n}}{\om^{2/n}}\vert \na z_\#\vert^2-\int_{\Om_\#}f_\# z_\#.
	\end{align*}
 Note that, by Poly\'a-Szeg\"o for the Euclidean norm, for any $z\in W_0^{1,2}(\Om_\star;\Gamma_\star)$,
  \begin{align*}
		\mathcal{F}_\#(z):=\int_{\Om_\#}\frac{1}{2}\frac{\kappa^{2/n}}{\om^{2/n}}\vert \na z\vert^2-\int_{\Om_\#}f_\# z\ge \mathcal{F}_\#(z_\#).
	\end{align*}
 Hence $z_\#$ minimizes the functional $\mathcal{F}_\#$. It follows that $\frac{\om^{2/n}}{\kappa^{2/n}}z_\#$ solves \eqref{eq-MBP-v}, {\bf Claim 3} follows.

	Finally, {\bf Claim 3} together with \eqref{ineq-AFT-4-4} implies that
	\begin{align*}
		u_\star(x)\geq z_\star(x)=z(x),\quad x\in\Om_\star,
	\end{align*}
	where $z\in W_0^{1,2}(\Om_\star;\Gamma_\star)$ is a solution to \eqref{eq-MBP-z-ani}.
	This completes the proof.
\end{proof}
\section{Capillary gauge in the half-space}\label{Sec-3}

In this section, we first introduce a gauge in $\mfR^n_+$ (as a special case of convex cone), by virtue of which we transform the study of capillary problem in the half-space to the study of related anisotropic problem with respect to such gauge in $\mfR^n_+$.

%let $\Om\subset\mfR^n_+$ be a bounded, smooth, connected, relatively open subset.
%We set $M:=\overline{\p\Om\cap{\rm int}(\mfR^n_+)}$, $T:=\p\Om\setminus M$, $E_n=(0,\ldots,0,1)$.

%\subsection{Capillary {\color{purple}gauge} in a half-space}
Denote $E_n=(0,\ldots,0,1)$. Given $\theta\in(0,\pi)$, let $F_\theta:\mfR^n\ra\mfR^+$ be given by
\begin{align}\label{defn-capillary-norm}
    F_\theta(\xi)=\vert\xi\vert-\cos\theta\left<\xi,E_n\right>.
\end{align}
%For simplicity, we omit the subscript $\theta$ in all follows.
It is direct to see that $F_\theta$ is indeed a gauge and it is smooth on $\mfR^n\setminus\{0\}$. We call it {\it capillary gauge} (see \cref{dualgauge1} for the reason). Note that $F_\theta$ is not even except for the case $\theta=\frac{\pi}{2}$. 
Since $$\na F_\theta(\xi)=\frac{\xi}{|\xi|}-\cos\theta E_n,$$ one sees that the Wulff shape with respect to $F_\theta$ is given by $$\na F_\theta(\mfS^{n-1})=\mfS^{n-1}-\cos\theta E_n=\{|x+\cos\theta E_n|<1\}.$$
\begin{proposition}\label{dualgauge}
 The dual gauge $F_\theta^o: \mfR^n\to\mfR$ is given by
 \begin{align*}
    F_\theta^o(x)=\frac{\vert x\vert^2}{\sqrt{\cos^2\theta\left<x,E_n\right>^2+\sin^2\theta\vert x\vert^2}-\cos\theta\left<x,E_n\right>}.
\end{align*}
\end{proposition}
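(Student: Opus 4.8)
The plan is to verify the claimed formula directly from the definition $F_\theta^o(x)=\sup_{z\in\mfS^{n-1}}\langle x,z\rangle/F_\theta(z)$ by a Cauchy--Schwarz argument. Write $G(x)$ for the expression on the right-hand side of the proposition and fix $x\neq0$ (for $x=0$ both sides vanish). I would first record the elementary facts that $G$ is nonnegative and positively $1$-homogeneous, and --- crucially --- that its denominator $\sqrt{\cos^2\theta\langle x,E_n\rangle^2+\sin^2\theta|x|^2}-\cos\theta\langle x,E_n\rangle$ is strictly positive for $x\neq0$: indeed $\sqrt{\cos^2\theta\langle x,E_n\rangle^2+\sin^2\theta|x|^2}\geq|\cos\theta\langle x,E_n\rangle|$, and equality would force $\sin^2\theta|x|^2=0$, i.e.\ $x=0$ since $\theta\in(0,\pi)$. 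In particular $G(x)>0$.

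The heart of the argument is the identity
\begin{align*}
\bigl|x+\cos\theta\,G(x)\,E_n\bigr|=G(x),
\end{align*}
which is equivalent to $\sin^2\theta\,G(x)^2-2\cos\theta\langle x,E_n\rangle\,G(x)-|x|^2=0$. I would prove it by rationalizing $G(x)$: multiplying its numerator and denominator by $\sqrt{\cos^2\theta\langle x,E_n\rangle^2+\sin^2\theta|x|^2}+\cos\theta\langle x,E_n\rangle$ and using $\sin^2\theta=1-\cos^2\theta$ shows that $G(x)=\sin^{-2}\theta\bigl(\cos\theta\langle x,E_n\rangle+\sqrt{\cos^2\theta\langle x,E_n\rangle^2+\sin^2\theta|x|^2}\bigr)$, which is precisely a root of the quadratic $\lambda\mapsto\sin^2\theta\,\lambda^2-2\cos\theta\langle x,E_n\rangle\,\lambda-|x|^2$; expanding $|x+\cos\theta\,G(x)E_n|^2=|x|^2+2\cos\theta\,G(x)\langle x,E_n\rangle+\cos^2\theta\,G(x)^2$ and substituting the quadratic relation then yields $G(x)^2$.

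Granting this, the proof closes in two steps. For any $z\in\mfS^{n-1}$, write $\langle x,z\rangle=\langle x+\cos\theta\,G(x)E_n,\,z\rangle-\cos\theta\,G(x)\langle z,E_n\rangle$; by Cauchy--Schwarz and the identity above, $\langle x,z\rangle\leq|x+\cos\theta\,G(x)E_n|-\cos\theta\,G(x)\langle z,E_n\rangle=G(x)\bigl(1-\cos\theta\langle z,E_n\rangle\bigr)=G(x)F_\theta(z)$, whence $F_\theta^o(x)\leq G(x)$. Conversely, since $G(x)>0$ the vector $z_0:=(x+\cos\theta\,G(x)E_n)/G(x)$ is a unit vector (by the identity), and for $z=z_0$ the Cauchy--Schwarz step becomes an equality, so $F_\theta^o(x)\geq\langle x,z_0\rangle/F_\theta(z_0)=G(x)$. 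Hence $F_\theta^o=G$. The only points requiring care --- hardly genuine obstacles --- are the strict positivity of the denominator of $G$ and of $F_\theta(z)=1-\cos\theta\langle z,E_n\rangle$ on $\mfS^{n-1}$ (both needing $\theta\in(0,\pi)$) and the trivial case $x=0$. As an alternative route, one could argue geometrically: since $\p\mcW=\Phi(\mfS^{n-1})$ (recalled in \cref{Sec-2}) and $\na F_\theta(\mfS^{n-1})=\mfS^{n-1}-\cos\theta E_n=\p B_1(-\cos\theta E_n)$ as computed just before the statement, the unit Wulff ball $\{F_\theta^o<1\}$ is the bounded convex open set with boundary $\p B_1(-\cos\theta E_n)$ containing the origin, hence $\{F_\theta^o<1\}=B_1(-\cos\theta E_n)$; then $F_\theta^o(x)=\inf\{\lambda>0:\ |x+\lambda\cos\theta E_n|<\lambda\}$, and solving this quadratic inequality in $\lambda$ and rationalizing gives the same expression.
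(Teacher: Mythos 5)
Your proof is correct, and it takes a genuinely different route from the paper's. You verify the formula directly from the definition $F_\theta^o(x)=\sup_{z\in\mfS^{n-1}}\langle x,z\rangle/F_\theta(z)$: you establish the key identity $|x+\cos\theta\,G(x)E_n|=G(x)$ by noting that the rationalized form of $G$ is the positive root of $\sin^2\theta\,\lambda^2-2\cos\theta\langle x,E_n\rangle\,\lambda-|x|^2=0$, then prove $F_\theta^o\leq G$ by a Cauchy--Schwarz estimate and $F_\theta^o\geq G$ by exhibiting the explicit maximizer $z_0=(x+\cos\theta\,G(x)E_n)/G(x)\in\mfS^{n-1}$. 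The paper instead identifies the Wulff shape $\nabla F_\theta(\mfS^{n-1})=\partial B_1(-\cos\theta E_n)$, computes the radial function $\rho$ of the associated convex body $K$ by solving $|\rho(x)x+\cos\theta E_n|=1$ for $x\in\mfS^{n-1}$, and then invokes the classical convex-geometry fact (cf.\ Schneider's book) that the support function of the polar body $K^o$ is $1/\rho$, followed by $1$-homogeneous extension. Your argument is more self-contained --- it avoids the radial-function/support-function duality theorem and produces the optimal direction $z_0$ explicitly --- while the paper's is shorter and more conceptual, making the role of the Wulff shape visible; the alternative geometric route you sketch at the end is essentially the paper's strategy. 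Both approaches correctly rely on $\theta\in(0,\pi)$ to keep $F_\theta>0$ on $\mfS^{n-1}$ and the denominator of $G$ strictly positive, which you explicitly flag.
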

\begin{proof}

Consider the convex body $K$ determined by $\na F_\theta(\mfS^{n-1})=\{|x+\cos\theta E_n|=1\}$. We shall find the radial function for $K$. Let $y\in \na F_\theta(\mfS^{n-1})$ be given by $y=\rho(x)x, x\in\mfS^{n-1}$. Thus
$$|\rho(x)x+\cos\theta E_n|=1.$$
It follows that 
$$\rho(x)=\sqrt{\cos^2\theta\left<x,E_n\right>^2+\sin^2\theta}-\cos\theta\left<x,E_n\right>.
$$
That is, the radial function for $K$ is given by $\rho: \mfS^{n-1}\to \mfR$ as above. A classical result in the theory of convex bodies says that the support function for the dual convex body $K^o$ is equal to the reciprocal of the radial function of $K$, see e.g., \cite[(1.52)]{Schneider14}. On the other hand, $F^o_\theta$, when restricting on $\mfS^{n-1}$, is exactly the support function for $K^o$.
Therefore, we see that $F^o_\theta: \mfS^{n-1}\to\mfR$ is given by
$$F_\theta^o(x)=\frac{1}{\rho(x)}=\frac{1}{\sqrt{\cos^2\theta\left<x,E_n\right>^2+\sin^2\theta}-\cos\theta\left<x,E_n\right>}.
$$
By one-homogeneous extension of $F_\theta^o$ to $\mfR^n$, that is, $F_\theta^o(x)=|x|F_\theta^o(\frac{x}{|x|})$, we get the assertion.
\end{proof}
\begin{proposition}\label{dualgauge1}
 The Wulff ball $\mcW_{r,\theta}$ of radius $r$ centered at the origin, with respect to $F_\theta$, is given by $B_r(-r\cos\theta E_n),$ the Euclidean ball of radius $r$ centered at $-r\cos\theta E_n$.
 In particular, $\p\mcW_{r,\theta}$ intersects with the hyperplane $\p\mfR_+^n=\{x_n=0\}$ at the contact angle $\theta$.
\end{proposition}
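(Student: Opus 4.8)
The plan is to read off the claim from the two facts already at hand: the explicit formula for $F_\theta^o$ in \cref{dualgauge}, and the general identity $\p\mcW=\Phi(\mfS^{n-1})$ for gauges recorded in \cref{Sec-2}, combined with the computation $\na F_\theta(\mfS^{n-1})=\mfS^{n-1}-\cos\theta E_n=\p B_1(-\cos\theta E_n)$ made just before the statement. First I would reduce to the unit Wulff ball: by the positive one-homogeneity of $F_\theta^o$ one has $\mcW_{r,\theta}=r\mcW=\{x\in\mfR^n:F_\theta^o(x)<r\}$, with $\p\mcW_{r,\theta}=\{F_\theta^o=r\}$ since $F_\theta^o$ is continuous and positive on $\mfS^{n-1}$; and since $B_r(-r\cos\theta E_n)=r\,B_1(-\cos\theta E_n)$, it is enough to treat $r=1$ and then dilate.

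The primary argument is then immediate: $\p\mcW=\Phi(\mfS^{n-1})=\na F_\theta(\mfS^{n-1})=\{\,|x+\cos\theta E_n|=1\,\}=\p B_1(-\cos\theta E_n)$. Because $\theta\in(0,\pi)$ forces $|\cos\theta|<1$, the origin lies inside $B_1(-\cos\theta E_n)$, so $\mcW$ is a bounded open convex set whose topological boundary is the sphere $\p B_1(-\cos\theta E_n)$; hence $\mcW=B_1(-\cos\theta E_n)$, and scaling by $r$ gives $\mcW_{r,\theta}=B_r(-r\cos\theta E_n)$. Alternatively, for a self-contained verification one starts from \cref{dualgauge}: writing $t=\langle x,E_n\rangle$ and $\rho=|x|$, the denominator there is strictly positive for $x\ne0$ (since $\sqrt{\cos^2\theta\,t^2+\sin^2\theta\,\rho^2}\ge|\cos\theta\,t|\ge\cos\theta\,t$, with strict inequality when $\rho>0$), so $F_\theta^o(x)=r$ is equivalent to $\rho^2+r\cos\theta\,t=r\sqrt{\cos^2\theta\,t^2+\sin^2\theta\,\rho^2}$; both sides are nonnegative, so squaring, cancelling the $r^2\cos^2\theta\,t^2$ terms and dividing by $\rho^2$ yields $\rho^2+2r\cos\theta\,t=r^2\sin^2\theta$, i.e.\ $|x+r\cos\theta E_n|^2=r^2$. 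This chain is reversible (the spurious root in the un-squaring step would force $\rho^2<0$), so $\p\mcW_{r,\theta}=\p B_r(-r\cos\theta E_n)$, and the same boundedness/convexity remark identifies the interiors.

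Finally, for the contact-angle assertion: the center $-r\cos\theta E_n$ of $B_r(-r\cos\theta E_n)$ sits at signed height $-r\cos\theta$, so its boundary sphere meets $\p\mfR^n_+=\{x_n=0\}$ along $\{x_n=0,\ |x'|=r\sin\theta\}$; at such a contact point $p$ the outward unit normal of $\p B_r(-r\cos\theta E_n)$ is $\nu(p)=(p+r\cos\theta E_n)/r$, whence $\langle\nu(p),E_n\rangle=\cos\theta$, which is exactly the contact-angle condition built into \eqref{isop-ineq}. The whole proof is essentially computational; the only points requiring a little care are the elementary topological fact that a bounded open set with boundary a round sphere is the open ball it bounds (for $\mcW$ this is also guaranteed by convexity of $F_\theta^o$), the reversibility of the squaring step in the alternative route, and matching the sign convention in the contact-angle computation.
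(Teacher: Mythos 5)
Your proposal is correct. Your ``alternative'' computation is essentially the paper's own proof: the authors simply state that from \cref{dualgauge} one checks directly that $F_\theta^o(x)<r$ is equivalent to $|x+r\cos\theta E_n|<r$, and then verify the contact angle by computing the outer normal at a point $z\in\p B_r(-r\cos\theta E_n)\cap\{x_n=0\}$ as $(z+r\cos\theta E_n)/r$ and taking its inner product with $E_n$ --- exactly your last step. Your ``primary'' route, via the general identity $\p\mcW=\Phi(\mfS^{n-1})=\nabla F_\theta(\mfS^{n-1})=\mfS^{n-1}-\cos\theta E_n$ together with convexity of $\mcW$, is a slight conceptual variant: it avoids the explicit formula for $F_\theta^o$ altogether (so it would go through even without \cref{dualgauge}), at the cost of invoking the elementary fact that a bounded open convex set whose boundary is a round sphere is the corresponding open ball. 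Both are short and valid; the paper takes the more computational one, presumably because \cref{dualgauge} was already in place. The only tiny imprecision in your write-up is the parenthetical that the spurious root forces $\rho^2<0$: what one actually gets is $\rho^2\le 0$, i.e.\ $x=0$, which is the degenerate case you have already set aside, so the reversibility conclusion stands.
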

\begin{proof}
Using \cref{dualgauge}, it is direct to check that
$F_\theta^o(x)<r$ is equivalent that $|x+r\cos\theta E_n|<r$, the first assertion follows. For any $z\in\p B_r(-r\cos\theta E_n)\cap\{x_n=0\}$,
\begin{align*}
    \left<\frac{z-(-r\cos\theta E_n)}{r},E_n\right>=\cos\theta.
\end{align*}
The second assertion follows. 
\end{proof}

%we note that $C_\theta$ is exactly defined as the volume of the unit Wulff sector centered at the origin; and also $n$ times the anisotropic relative perimeter of $\mfW$ in $\mfR^n_+$ (and hence the free energy of $\mfW$, see \cref{Lem-free-energy}).

We set
\begin{align*}
    {\bf b}_\theta:=|\mcW_{1,\theta}\cap \mfR_+^n|.%{\color{red}=\left(\frac{\theta\om_n}{\pi}-\frac{\sin^{n-1}\theta\cos\theta\om_{n-1}}{n}\right)}.
\end{align*}
%The second equality follows from a direct computation. 
One sees easily that $$|\mcW_{r,\theta}\cap \mfR_+^n|={\bf b}_\theta r^n.$$

Using the gauge $F_\theta$, we observe that
 the classical free energy functional can be reformulated as anisotropic area functional.  
\begin{proposition}\label{Lem-free-energy}
Let $E$ be a set of finite perimeter in $\mfR^n_+$.
Then $$P_{F_\theta}(E;\mfR^n_+)=P(E;\mfR^n_+)-\cos\theta P(E;\p\mfR^n_+).$$
\end{proposition}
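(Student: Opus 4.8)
The plan is to compute the anisotropic perimeter $P_{F_\theta}(E;\mfR^n_+)$ using its integral representation \eqref{eq-P_F} and simply unravel the definition of $F_\theta$. By \eqref{eq-P_F} applied with $\S=\mfR^n_+$ and $F=F_\theta$, we have
\begin{align*}
    P_{F_\theta}(E;\mfR^n_+)=\int_{\p^\ast E\cap \mfR^n_+}F_\theta(\nu_E)\rd\mcH^{n-1}
    =\int_{\p^\ast E\cap \mfR^n_+}\left(|\nu_E|-\cos\theta\<\nu_E,E_n\>\right)\rd\mcH^{n-1}.
\end{align*}
Since $|\nu_E|=1$ on the reduced boundary, the first term gives exactly $\mcH^{n-1}(\p^\ast E\cap \mfR^n_+)=P(E;\mfR^n_+)$. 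It then remains to identify $\int_{\p^\ast E\cap \mfR^n_+}\<\nu_E,E_n\>\rd\mcH^{n-1}$ with $P(E;\p\mfR^n_+)$, the relative perimeter of $E$ along the hyperplane $\{x_n=0\}$.

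For this second identification I would invoke the divergence theorem (Gauss--Green for sets of finite perimeter) applied to the constant vector field $X=E_n$. Integrating $\div X=0$ over $E$ and splitting the reduced boundary of $E$ (as a subset of $\mfR^n$) into its portion in the open half-space and its portion on the hyperplane, one gets
\begin{align*}
    0=\int_E \div E_n\,\rd x=\int_{\p^\ast E\cap \mfR^n_+}\<\nu_E,E_n\>\rd\mcH^{n-1}+\int_{\p^\ast E\cap \p\mfR^n_+}\<\nu_E,E_n\>\rd\mcH^{n-1}.
\end{align*}
On the flat part $\p^\ast E\cap \p\mfR^n_+$, the outer unit normal to $E$ (viewed in $\mfR^n$) is $-E_n$, so the second integral equals $-\mcH^{n-1}(\p^\ast E\cap \p\mfR^n_+)=-P(E;\p\mfR^n_+)$. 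Hence $\int_{\p^\ast E\cap \mfR^n_+}\<\nu_E,E_n\>\rd\mcH^{n-1}=P(E;\p\mfR^n_+)$, and combining the two computations yields
\begin{align*}
    P_{F_\theta}(E;\mfR^n_+)=P(E;\mfR^n_+)-\cos\theta\,P(E;\p\mfR^n_+),
\end{align*}
as claimed.

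The main subtlety — not a deep obstacle, but the point requiring care — is the bookkeeping for sets of finite perimeter: one must make sure that $\p^\ast E$ decomposes (up to $\mcH^{n-1}$-null sets) into the part lying in the open half-space and the part on the boundary hyperplane, that the trace of $E$ on $\{x_n=0\}$ is compatible with the definition of $P(E;\p\mfR^n_+)$ used in \eqref{isop-ineq}, and that the Gauss--Green formula is applied on a set of finite perimeter in $\mfR^n$ (after noting $|E|<\infty$ may require a cutoff/approximation argument, or one works with $E$ intersected with a large ball and lets the radius tend to infinity, using that the flux through the spherical cap is controlled). If $E$ is not known to have finite perimeter in all of $\mfR^n$ a priori, one reduces to that case by the relation between $P_{F_\theta}(E;\mfR^n_+)$ finiteness and $P(E;\mfR^n_+)$ finiteness recalled just before \eqref{eq-P_F}. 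Alternatively, and perhaps more cleanly, one can bypass Gauss--Green entirely by taking the definition $P(E;\p\mfR^n_+)$ to mean the perimeter of the trace of $E$ and citing the standard identity from the theory of sets of finite perimeter in domains with Lipschitz boundary (see e.g. \cite{Mag12}); I would mention this as the streamlined route.
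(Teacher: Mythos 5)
Your proposal is correct and follows essentially the same route as the paper: expand $F_\theta(\nu_E)$ via \eqref{eq-P_F} and then apply the divergence theorem to the constant field $E_n$ over $E$ to identify $\int_{\p^\ast E\cap \mfR^n_+}\<\nu_E,E_n\>\,\rd\mcH^{n-1}$ with $P(E;\p\mfR^n_+)$. You supply somewhat more bookkeeping about the decomposition of the reduced boundary than the paper's terse proof, but the core argument is the same.
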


\begin{proof}
Since ${\rm div}(E_n)=0$, using the divergence theorem, one gets
\begin{align*}
    0=\int_\O {\rm div}(E_n)\rd x
    =\int_{\p^\ast E\cap \mfR^n_+}\left<\nu_E,E_n\right>\rd\mcH^{n-1}-P(E;\p\mfR^n_+).
\end{align*}
On the other hand, the definition of $F_\theta$ yields
\begin{align*}
    P_{F_\theta}(E;\mfR^n_+)=\int_{\p^\ast E\cap \mfR^n_+} F_\theta(\nu_E)\rd\mcH^{n-1}
    =P(E;\mfR^n_+)-\cos\theta\int_{\p^\ast E\cap \mfR^n_+}\left<\nu_E,E_n\right>\rd\mcH^{n-1}.
\end{align*}
This completes the proof.
\end{proof}
From this, we see that the classical relative isoperimetric inequality in $\mfR^n_+$, 
$$P(E;\mfR^n_+)-\cos\theta P(E;\p\mfR^n_+)\ge  n {\bf b}_\theta^{\frac{1}{n}} |E|^{\frac{n-1}{n}},$$
is equivalent to the anisotropic isoperimetric inequality with respect to $F_\theta$,
$$P_{F_\theta}(E;\mfR^n_+)\ge n {\bf b}_\theta^{\frac{1}{n}} |E|^{\frac{n-1}{n}},$$
where equality holds if and only if $E=\mcW_{r,\theta}\cap \mfR^n_+$, up to a translation on $\p \mfR^n_+$.

In the same spirit, from \cite[Theorem 2]{CNV04} and \cite[Theorem A.1]{CFR20}, we get the following optimal Sobolev inequality.
\begin{theorem}
     Given $\theta\in(0,\pi)$ and $1<p<n$, let $u\in \dot{W}^{1,p}(\mfR^n_+):=\{u\in L^{\frac{np}{n-p}}(\mfR^n_+):\nabla u\in L^{p}(\mfR^n_+)\}$ be a non-positive function.
    Then
$$\|u\|_{L^{\frac{np}{n-p}}(\mfR^n_+)}\le  C_{\theta, p}\left(\int_{\mfR^n_+} (|\nabla u|-\cos\theta\<\nabla u, E_n\>)^{p}\right)^{\frac{1}{p}}.$$
Here $C_{\theta, p}$ is given by
$$C_{\theta, p}=\frac{1}{(\int_{\mfR^n_+} (|\nabla U_{\theta, p}|-\cos\theta\<\nabla U_{\theta, p}, E_n\>)^{p})^{\frac{1}{p}}},$$
with $$U_{\theta, p}(x)=-\left(\frac{1}{\sigma_{p,\theta}+F_\theta^o(x)^{\frac{p}{p-1}}}\right)^{\frac{n-p}{p}},$$ and $\sigma_{p,\theta}>0$ is determined by $\|U_{\theta, p}\|_{L^{\frac{np}{n-p}}(\mfR^n_+)}=1.$
Equality holds if and only if $$u(x)=CU_{\theta, p}(\lambda (x-x_0))$$ for some constant $C\ge 0, \lambda\neq 0$ and some point $x_0\in \p\mfR^n_+$.

\end{theorem}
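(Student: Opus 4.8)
The plan is to reduce the stated capillary Sobolev inequality to the known anisotropic Sobolev inequality in convex cones with respect to the capillary gauge $F_\theta$, exactly in the spirit of the proof of \cref{Lem-free-energy} and the paragraph following it. By construction of $F_\theta$ in \eqref{defn-capillary-norm}, for $u\in\dot W^{1,p}(\mfR^n_+)$ we have the pointwise identity $F_\theta(\nabla u)=|\nabla u|-\cos\theta\<\nabla u,E_n\>$, so the right-hand side of the asserted inequality is precisely $\left(\int_{\mfR^n_+}F_\theta(\nabla u)^p\right)^{1/p}$. Thus the statement is literally the anisotropic Sobolev inequality
$$\|u\|_{L^{\frac{np}{n-p}}(\mfR^n_+)}\le C_{\theta,p}\left(\int_{\mfR^n_+}F_\theta(\nabla u)^p\,\rd x\right)^{1/p}$$
in the convex cone $\S=\mfR^n_+$, which is the content of \cite[Theorem 2]{CNV04} (the gauge form; note $F_\theta$ is a genuine gauge, not necessarily a norm, which is why one also invokes \cite[Theorem A.1]{CFR20} for the sharp constant and equality cases in the non-even setting). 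So the first step is simply to quote that result for $F=F_\theta$, $\S=\mfR^n_+$.

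The second step is to identify the optimizers and the sharp constant in terms of the explicit data of $F_\theta$. The general anisotropic Sobolev extremals in a cone are, up to the symmetries of the problem, of the Aubin–Talenti–Bliss form $x\mapsto (\sigma+F^o(x)^{p/(p-1)})^{-(n-p)/p}$, translated along the "lineality" part of the cone; for $\S=\mfR^n_+=\mfR^{n-1}\times(0,\infty)$ the translation freedom is exactly translation by points $x_0\in\p\mfR^n_+$, and there is an overall scaling $\lambda\neq 0$ (here one must check that $F_\theta^o$, hence the extremal, is invariant under the relevant reflection so that only $x_0\in\p\mfR^n_+$ and $\lambda\neq0$, not $\lambda>0$, appear; this uses that $F_\theta^o(x)$ depends on $x$ only through $|x|$ and $\<x,E_n\>$, as computed in \cref{dualgauge}). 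Normalizing $U_{\theta,p}$ by $\|U_{\theta,p}\|_{L^{np/(n-p)}(\mfR^n_+)}=1$ fixes $\sigma_{p,\theta}$ and forces $C_{\theta,p}$ to be the reciprocal of $\left(\int_{\mfR^n_+}F_\theta(\nabla U_{\theta,p})^p\right)^{1/p}$, which is exactly the displayed formula. The non-positivity hypothesis on $u$ plays the usual role: it lets us work with $-|u|\le 0$ (so that $u_\star\le 0$ in the associated symmetrization and the extremal is taken with a minus sign) without changing either side, reconciling the sign convention here with the convex-symmetrization setup of \cref{Sec-2}.

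The only genuine point requiring care — the "hard part," though it is more bookkeeping than difficulty — is verifying that the hypotheses of \cite[Theorem 2]{CNV04} and \cite[Theorem A.1]{CFR20} are met by $F_\theta$ on $\mfR^n_+$: namely that $F_\theta$ is a smooth, uniformly convex gauge on $\mfR^n\setminus\{0\}$ (established in \cref{Sec-3}: $F_\theta$ is smooth away from the origin and $\nabla^2 F_\theta|_\nu=\mathrm{Id}$), that $\mfR^n_+$ is a convex cone compatible with the Wulff shape of $F_\theta$ in the sense required there (the Wulff shape $\mfS^{n-1}-\cos\theta E_n$ meets $\p\mfR^n_+$ transversally, so the relative anisotropic isoperimetric inequality and its equality case from \cref{Thm-CRS16-1-3} apply), and that the cone has the product structure $\mfR^{n-1}\times(0,\infty)$ needed to pin down the translation invariance of the extremals. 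Once these are in place, the inequality, the value of $C_{\theta,p}$, and the rigidity statement are immediate transcriptions of the cited theorems, and the proof is complete.
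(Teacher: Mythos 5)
The paper gives this theorem with no proof of its own, simply citing \cite[Theorem 2]{CNV04} and \cite[Theorem A.1]{CFR20} for the gauge $F = F_\theta$ on the cone $\S = \mfR^n_+$ and using the pointwise identity $F_\theta(\nabla u) = |\nabla u| - \cos\theta\<\nabla u, E_n\>$; this is exactly the reduction you describe, so your proposal matches the paper's approach. One minor quibble: the non-positivity restriction is not about replacing $u$ by $-|u|$ (which would change $F_\theta(\nabla u)$ for sign-changing $u$), but rather, as the paper's remark following the statement explains, is forced because $F_\theta$ is not even, so the symmetrization argument of \cite{CFR20} applies only to functions of a single sign.
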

\begin{remark}
\normalfont
    It has been stated in \cite[Theorem A.1]{CFR20} that the Sobolev inequality holds for possibly sign-changed $u$. However, since $F_\theta$ here is not even, from the proof, one has to restrict to non-negative or non-positive functions.
\end{remark}

%As another geometric application, we note that by using the capillary gauge \eqref{defn-capillary-norm}, we may obtain the Heintze-Karcher-type inequality (\cite[Theorem 1.1]{JWXZ22-A}) and Alexandrov-type theorem (\cite[Corollary 1.2]{JWXZ22-A}) for capillary hypersurfaces in a half-space by letting $F=F_\theta$ in \cite[Theoorem 1.2, Theorem 1.1]{JWXZ22-B}, the conclusions then follow from \cref{dualgauge1} and the following observation.
%\begin{proposition}
%    For a smooth hypersurface $\S\subset\mfR^{n+1}$, the second fundamental form $h$ agrees with the anisotropic second fundamental form $S_{F_\theta}$.
%\end{proposition}
%\begin{proof}
%    At any $x\in\S$, since
%    \begin{align*}
        %S_{F_\theta}\mid_x=\nabla^2_{\xi\xi}F_\theta\mid_{\nu(x)}\circ h\mid_x,
  %  \end{align*}
  %  we get the assertion from the fact that $\nabla^2_{\xi\xi}F_\theta\mid_{\nu(x)}={\rm Id}$.
%\end{proof}

%----------------------------------------------------------------------------------------------------------
\section{Capillary Schwarz symmetrization in the half-space}\label{Sec-4}

We define the capillary Schwarz symmetrization in a rather direct manner. 
Given a non-positive measurable function $u: \mfR^n_+\to (-\infty, 0]$, which vanishes at infinity, we set $r_t$ to be the radius of $\mcW_{r_t,\theta}=B_{r_t}(-r_t\cos\theta E_n)$ %the $\theta$-ball\footnote{By $\theta$-ball we mean, the part of a ball that lies in the upper half-space $\mfR^n_+$, having constant contact angle $\theta$ with $\{x_n=0\}$} 
such that $${\bf b}_\theta r_t^n=|\mcW_{r_t,\theta}\cap\mfR^n_+|=|\{x\in\mfR^n_+: u(x)<t\}|=\mu(t).$$ 
The capillary symmetrization of $u$ is defined as
\begin{align*}
    u_\star(x):=\sup\{t\leq0: \mu(t)< {\bf b}_\theta\vert x-(-r_t\cos\theta E_n)\vert^n\}=\sup\{t\leq0: r_t< \vert x+r_t\cos\theta E_n\vert\}.
\end{align*}
%or equivalently,
%\begin{align*}
   % u_\star(x):=\inf\{t\leq0:\mu(t)>C_\theta\vert x-(-r_t\cos\theta E_n)\vert^n\}.
%\end{align*}
%Since $u^\#$ is radially symmetric with respect to the center $-r_u\cos\theta E_n$, it follows immediately that
%\begin{align*}
%    \vert x\vert^n=2\vert\{u>t\}\vert/\om_n,\quad\text{for any }x\in\{u^\#=t\},
%\end{align*}
%and hence
%\begin{align*}
%    \vert\{x\in\mfR^n_+:u^\#(x)>t\}\vert=\vert\{u>t\}\vert.
%\end{align*}
By definition, one sees readily that for any $t<0$,  the sub level-set $\{u_\star<t\}$ of the rearranged function $u_\star$
is given by some $\mcW_{r_t,\theta}\cap\mfR^n_+$ that has the same measure with $\{u<t\}$. This agrees with the classical idea for Schwarz symmetrization.

Let us proceed by recalling the capillary gauge $F_\theta$ and its dual $F^o_\theta$. As in the proof of \cref{dualgauge1}, we see that $F_\theta^o(x)>r$ is equivalent that $|x+r\cos\theta E_n|>r.$ Therefore, 
the capillary symmetrization $u_\star$ of $u$ can be reformulated as
\begin{align*}
    u_\star(x)=u_\ast\left({\bf b}_\theta (F^o_\theta(x))^n\right),
\end{align*}
where $u_\ast$ is the increasing arrangement.
In the special case $\theta=\pi/2$, we see $F^o_\theta(x)=\vert x\vert$, and
the capillary symmetrization is just
\begin{align*}
    u_\star(x)=u_\ast(\frac{\om_n\vert x\vert^n}{2}).
\end{align*}
From this point of view, we can translate the result in \cref{Sec-2} to the capillary symmetrization.
%----------------
The following is the corresponding P\'olya-Szeg\"o principle, following \cref{Prop-PolyaSzego-ani}.
\begin{theorem}
     Let  $p\geq1$ and $u\in W^{1, p}(\mfR^n_+)$ be a non-positive function which vanishes at infinity.
    Then $u_\star$ is in the same function space as $u$ and the following holds:
    \begin{align}\label{ineq-Polya-Szego}
        \int_{\mfR^n_+}\left(\vert \na u\vert-\cos\theta \na u\cdot E_n\right)^p\rd x
        \geq\int_{\mfR^n_+}\left(\vert \na u_\star\vert-\cos\theta \na u_\star\cdot E_n\right)^p \rd x.
    \end{align}
\end{theorem}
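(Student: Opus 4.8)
The plan is to reduce this statement to \cref{Prop-PolyaSzego-ani} applied to the convex cone $\S = \mfR^n_+$ with the gauge $F = F_\theta$. First I would observe that $\mfR^n_+$ is an open convex cone with vertex at the origin, so \cref{Prop-PolyaSzego-ani} is applicable once we identify the convex symmetrization $(u_\star)_{F_\theta, \mfR^n_+}$ from \cref{subsec-2-2} with the capillary Schwarz symmetrization $u_\star$ defined at the start of \cref{Sec-4}. This identification is exactly what the discussion preceding the theorem establishes: with $\kappa = \kappa_{F_\theta, \mfR^n_+} = |\mcW_{1,\theta}\cap\mfR^n_+| = {\bf b}_\theta$, the definition $(u_\star)_{F_\theta,\mfR^n_+}(x) = u_\ast(\kappa (F^o_\theta(x))^n) = u_\ast({\bf b}_\theta (F^o_\theta(x))^n)$ coincides with $u_\star(x)$ by the reformulation given just above the theorem statement. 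So the two symmetrizations literally agree.

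Next I would match the integrands. By the definition \eqref{defn-capillary-norm} of the capillary gauge, $F_\theta(\na u) = |\na u| - \cos\theta\<\na u, E_n\> = |\na u| - \cos\theta\,\na u\cdot E_n$, and likewise $F_\theta(\na u_\star) = |\na u_\star| - \cos\theta\,\na u_\star\cdot E_n$. Hence the left-hand side of \eqref{ineq-Polya-Szego} equals $\int_{\mfR^n_+} F_\theta^p(\na u)\,\rd x$ and the right-hand side equals $\int_{\mfR^n_+} F_\theta^p(\na u_\star)\,\rd x$. Therefore \eqref{ineq-Polya-Szego} is precisely \eqref{ineq-Polya-Szego-ani} for the cone $\S = \mfR^n_+$ and the gauge $F_\theta$, and the claim that $u_\star$ lies in the same function space as $u$ is the corresponding assertion in \cref{Prop-PolyaSzego-ani}.

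It remains only to check the hypotheses of \cref{Prop-PolyaSzego-ani}: that $F_\theta$ is a gauge in the sense of \cref{Sec-2} and that $u$ is a non-positive $W^{1,p}$ function vanishing at infinity. The first is recorded right after \eqref{defn-capillary-norm} (it is non-negative, positively one-homogeneous, convex, and positive on $\mfS^{n-1}$ since $F_\theta(\xi) = |\xi|(1 - \cos\theta \<\xi/|\xi|, E_n\>) \ge |\xi|(1 - |\cos\theta|) > 0$), and the second is assumed. Thus the proof is a direct invocation: apply \cref{Prop-PolyaSzego-ani} with $\S = \mfR^n_+$, $F = F_\theta$, and rewrite both sides using $F_\theta(\xi) = |\xi| - \cos\theta\,\xi\cdot E_n$. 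There is no real obstacle here — the only point requiring any care is confirming that the convex symmetrization of \cref{subsec-2-2}, with the particular constant $\kappa = {\bf b}_\theta$, is the same object as the directly-defined capillary Schwarz symmetrization, and this was arranged by design in the paragraph preceding the theorem via \cref{dualgauge1}.
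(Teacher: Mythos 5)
Your proof is correct and follows exactly the route the paper intends: the theorem in Section 4 is stated ``following Theorem \ref{Prop-PolyaSzego-ani},'' and your argument spells out precisely that reduction---identifying the capillary Schwarz symmetrization with the convex symmetrization $(u_\star)_{F_\theta,\mfR^n_+}$ via $\kappa = {\bf b}_\theta$ and \cref{dualgauge1}, rewriting the integrands as $F_\theta^p(\na u)$ and $F_\theta^p(\na u_\star)$, and checking that $F_\theta$ is a gauge and $\mfR^n_+$ is an admissible convex cone. Nothing is missing; this is the paper's own (unwritten) proof, made explicit.
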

   
%--------------------------------------------------------
Next we consider the following mixed boundary problem for anisotropic PDE with respect to $F_\theta$ in sector-like domain $\O\subset\mfR^n_+$:
\begin{align}\label{eq-M}
    \begin{cases}
			-\De_{F_\theta} u=f\quad&\text{in }\Om,\\
			u=0\quad&\text{on }\Ga=\overline{\p \Om\cap \mfR^n_+},\\
   \nabla_\xi(\frac12F^2_\theta)(\na u)\cdot E_n=0\quad&\text{on }\Ga_1=\p\Om\setminus\Ga.
		\end{cases}
\end{align}
A weak solution $u\in W_0^{1,2}(\Om;\Ga)$ of \eqref{eq-M} satisfies
\begin{align}\label{eq-weaksol}
	\int_\Om \nabla_\xi(\frac12F_\theta^2)(\nabla u)\cdot \na v \rd x
	=\int_\Om fv\rd x
	%+{\color{purple}\int_Tgv}
	,\quad\forall v\in W^{1,2}_0(\Om;\Ga).
\end{align}
By a direct computation, we see 
\eqref{eq-weaksol} is equivalent that
\begin{align}\label{eq-weaksol1}
	\int_\Om (|\nabla u|-\cos\theta\<\nabla u,E_n\>)\left<\frac{\na u}{|\nabla u|}-\cos\theta E_n, \na v\right> \rd x
	=\int_\Om fv \rd x
	%+{\color{purple}\int_Tgv}
	,\quad\forall v\in W^{1,2}_0(\Om;\Ga).
\end{align}
%We say that a function $u\in W_0^{1,2}(\Om;M)$ \textit{solves the capillary PDE in $\Om$}, if it is a weak solution to the following mixed boundary problem.
%\begin{align*}
%    \begin{cases}
%			-\De_F u=f\quad&\text{in }\Om,\\
%			u=0\quad&\text{on }M,\\
%			\na_\xi F(\na u)\cdot E_n=0\quad&\text{on }T.
%		\end{cases}
%\end{align*}
%In particular, by a direct computation, we have
%\begin{align}
 %   \na_\xi F(\na u)
 %   =&\frac{\na u}{\vert\na u\vert}-\cos\theta E_n,\quad \na_\xi F(\na u)\cdot E_n
 %   =\frac{\na u\cdot E_n}{\vert\na u\vert}-\cos\theta,
 %   \end{align}
%    and also
%    \begin{align}
%    &\De_Fu
 %   ={\rm div}(F(\na u)\na_\xi F(\na u))
 %   ={\rm div}\left((\vert \na u\vert-\cos\theta\na u\cdot E_n)(\frac{\na u}{\vert\na u\vert}-\cos\theta E_n)\right)\notag\\
 %   =&\left(1-\cos\theta\frac{\na u\cdot E_n}{\vert\na u\vert}\right)\De u-\frac{2\cos\theta}{\vert\na u\vert}\na^2u[\na u, E_n]
 %   +\frac{\cos\theta \na u\cdot E_n}{\vert\na u\vert^3}\na^2u[\na u,\na u]+\cos^2\theta\na^2u[E_n,E_n].
%\end{align}

%By using \cref{Prop-isoperi-ineq}, we can follow the proof of \cref{Thm-rearrangement-ani} to obtain,
We have the following comparison result for \eqref{eq-M}, following \cref{Thm-rearrangement-ani}.
\begin{theorem}\label{Thm-rearrangement}
	Let $u\in W^{1,2}_0(\Om;\Ga)$ be a weak solution to \eqref{eq-M}, where $f$ satisfies \eqref{property-f}. Let $\Om_\star$ be some $\mcW_{r,\theta}\cap\mfR^n_+$ that has the same measure with $\Om$ and $z\in W_0^{1,2}(\Om_\star)$ be  the solution of the following rearranged mixed boundary problem
	\begin{align}\label{eq-MBP-z}
		\begin{cases}
			-\De_{F_\theta} z=f_\star\quad&\text{in }\Om_\star,\\
			z=0\quad&\text{on }\Ga_\star,\\
			\na_\xi (\frac12F_\theta^2)(\na z)\cdot E_n=0\quad&\text{on }(\Ga_1)_\star,
		\end{cases}
	\end{align}
	then 
	\begin{align}\label{ineq-ustar-z}
		0\geq u_\star(x)\geq z(x)\quad\text{for any }x\in\Om_\star.
	\end{align}
	%In other words, among all types of mixed boundary equations in the form of \eqref{eq-M}, the capillary PDE \eqref{eq-MBP-z} gives the smallest solution; among all types of domain $\Om$ in a half-space, $\theta$-ball minimizes the solution to \eqref{eq-M}.
\end{theorem}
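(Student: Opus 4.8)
The plan is to reduce \cref{Thm-rearrangement} to the already-established \cref{Thm-rearrangement-ani} by taking $F=F_\theta$ and $\S=\mfR^n_+$. This is the natural strategy, since the whole point of \cref{Sec-3} is that the capillary gauge $F_\theta$ turns the capillary problem \eqref{eq-M} into an instance of the anisotropic problem \eqref{eq-M-ani}. First I would verify that the hypotheses of \cref{Thm-rearrangement-ani} are met: $\mfR^n_+$ is an open convex cone with smooth boundary away from the origin; $\O\subset\mfR^n_+$ is sector-like by assumption; $f$ satisfies \eqref{property-f}; and the choice $a(x,\eta,\xi)=\na_\xi(\tfrac12 F_\theta^2)(\xi)$ satisfies the ellipticity condition \eqref{condition-elliptic-ani}, because $a(x,\eta,\xi)\cdot\xi=\na_\xi(\tfrac12F_\theta^2)(\xi)\cdot\xi=F_\theta^2(\xi)$ by Euler's relation for the one-homogeneous function $\tfrac12F_\theta^2$ (it is two-homogeneous, so the identity is exact, not merely an inequality). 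Thus \eqref{eq-M} is exactly \eqref{eq-M-ani} with $\Ga_1$ playing the role of $\Gamma_1$, and \eqref{eq-MBP-z} is exactly \eqref{eq-MBP-z-ani}.

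Next I would check the two bookkeeping points that connect the notation of \cref{Sec-4} to that of \cref{Sec-2}. One is that the Wulff sector $\mcW\cap\S$ relative to $F_\theta$ in $\mfR^n_+$ is precisely the model domain $\mcW_{r,\theta}\cap\mfR^n_+=B_r(-r\cos\theta E_n)\cap\mfR^n_+$, which is \cref{dualgauge1}; hence $\kappa_{F_\theta,\mfR^n_+}=|\mcW_{1,\theta}\cap\mfR^n_+|={\bf b}_\theta$, and the convex-symmetrization definition $(u_\star)_{F_\theta,\mfR^n_+}(x)=u_\ast(\kappa\,(F_\theta^o(x))^n)=u_\ast({\bf b}_\theta(F_\theta^o(x))^n)$ coincides with the capillary symmetrization $u_\star$ as reformulated just before the statement of the P\'olya--Szeg\"o theorem in \cref{Sec-4}. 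The other is that the boundary operator: since $\na_\xi(\tfrac12F_\theta^2)(\xi)=F_\theta(\xi)\na F_\theta(\xi)=(|\xi|-\cos\theta\<\xi,E_n\>)\big(\tfrac{\xi}{|\xi|}-\cos\theta E_n\big)$ is exactly the field $a(x,u,\na u)$ appearing in \eqref{eq-weaksol1}, and the outward normal to $\Ga_1=\p\O\setminus\Ga\subset\p\mfR^n_+$ is $-E_n$, the Neumann-type condition $-a(x,u,\na u)\cdot\nu=0$ on $\Gamma_1$ becomes $\na_\xi(\tfrac12F_\theta^2)(\na u)\cdot E_n=0$ on $\Ga_1$, matching \eqref{eq-M}. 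With these identifications in hand, \cref{Thm-rearrangement-ani} applied to $F=F_\theta$, $\S=\mfR^n_+$ yields $0\ge u_\star(x)\ge z(x)$ for all $x\in\O_\star$, which is \eqref{ineq-ustar-z}.

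There is essentially no hard analytic content left; the work is the verification that the dictionary $F\leftrightarrow F_\theta$, $\S\leftrightarrow\mfR^n_+$, $\Ga\leftrightarrow\Gamma$, $\Ga_1\leftrightarrow\Gamma_1$ is consistent and that the smoothness and integrability hypotheses carry over verbatim. The one place that warrants a careful sentence, rather than a one-line citation, is the equivalence of \eqref{eq-weaksol} and \eqref{eq-weaksol1} and the claim that $\O_\star=\mcW_{r,\theta}\cap\mfR^n_+$ is itself a sector-like domain (so that \cref{Thm-rearrangement-ani} genuinely applies to the rearranged problem on $\O_\star$): its spherical-cap boundary $\Ga_\star$ is smooth, its flat part $(\Ga_1)_\star\subset\p\mfR^n_+$ has positive $\mcH^{n-1}$-measure, and both pieces meet transversally at the contact angle $\theta\in(0,\pi)$ by \cref{dualgauge1}. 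If I wanted to make the reduction self-contained I could instead simply replay the proof of \cref{Thm-rearrangement-ani} line by line with $F=F_\theta$, since every step there — the co-area argument of Claim 1, the test-function argument of Claim 2, the ODE comparison, and the variational Claim 3 — uses only that $F$ is a gauge and that the relative anisotropic isoperimetric inequality \eqref{ineq-anisotropic-isoperimetric} holds, both of which are true for $F_\theta$ in $\mfR^n_+$. I expect the cited-reduction route to be the cleaner write-up.
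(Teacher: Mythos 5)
Your proposal is correct and takes the same route as the paper, which simply states the theorem as ``following \cref{Thm-rearrangement-ani}'' and leaves the reduction implicit; you have spelled out the dictionary ($F=F_\theta$, $\S=\mfR^n_+$, $\kappa={\bf b}_\theta$, $\nu=-E_n$ on $\Gamma_1$, and exact ellipticity via Euler's relation for the two-homogeneous function $\tfrac12F_\theta^2$) that makes the citation rigorous.
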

As a particular case, we are interested in the situation when $f=-n$,
we proceed by the following observation, which gives a very well illustration of our motivation to define the capillary rearrangement.
\begin{proposition}\label{Prop-distance-function}The function
    \begin{align}\label{eq-u}
        u(x)=\frac{F_\theta^o(x)^2-r^2}{2}
    \end{align}
 solves
\begin{align}\label{eq-Mixed-u}
    \begin{cases}
        -\De_{F_\theta}u=-n\quad&\text{in }\mcW_{r,\theta}\cap\mfR^n_+,\\
        u=0\quad&\text{on }\p\mcW_{r,\theta}\cap\mfR^n_+,\\
        \na_\xi (\frac12F_\theta^2)(\na u)\cdot E_n=0\quad&\text{on }\bar \mcW_{r,\theta}\cap\p\mfR^n_+.
    \end{cases}
\end{align}
Moreover, if $u$ is  radially symmetric with respect to  $F_\theta$ and solves \eqref{eq-Mixed-u}, then $u$ must be of the form in \eqref{eq-u}.
\end{proposition}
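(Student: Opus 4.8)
The key structural fact to exploit is that $F_\theta^o$ is a radial function with respect to the capillary gauge, so the function $u$ in \eqref{eq-u} is radially symmetric with respect to $F_\theta$. The plan is to verify the three conditions of \eqref{eq-Mixed-u} directly using the identities \eqref{one-homo} for the gauge and its dual, and then to establish the converse by reducing \eqref{eq-Mixed-u} to an ODE in the single variable $t = F_\theta^o(x)$.

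\textbf{Step 1: the interior equation.} Writing $u(x) = \tfrac12\big((F_\theta^o(x))^2 - r^2\big)$, the chain rule gives $\nabla u = F_\theta^o(x)\,\nabla F_\theta^o(x)$. By the second identity in \eqref{one-homo}, $\nabla_\xi F_\theta(\nabla u) = \nabla_\xi F_\theta(\nabla F_\theta^o(x)) = x/F_\theta^o(x)$, while by one-homogeneity $F_\theta(\nabla u) = F_\theta^o(x)\,F_\theta(\nabla F_\theta^o(x)) = F_\theta^o(x)$ using the first identity in \eqref{one-homo}. Since $\nabla_\xi(\tfrac12 F_\theta^2)(\xi) = F_\theta(\xi)\nabla_\xi F_\theta(\xi)$, we get $\nabla_\xi(\tfrac12 F_\theta^2)(\nabla u) = F_\theta^o(x)\cdot \tfrac{x}{F_\theta^o(x)} = x$. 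Hence $\Delta_{F_\theta} u = \operatorname{div}(x) = n$, so $-\Delta_{F_\theta} u = -n$ in $\mcW_{r,\theta}\cap\mfR^n_+$, as claimed.

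\textbf{Step 2: boundary conditions.} On $\p\mcW_{r,\theta}\cap\mfR^n_+$ we have $F_\theta^o(x) = r$, so $u = 0$ there. For the conormal condition on $\bar\mcW_{r,\theta}\cap\p\mfR^n_+$, note that by Step 1 $\nabla_\xi(\tfrac12 F_\theta^2)(\nabla u) = x$, so the condition $\nabla_\xi(\tfrac12 F_\theta^2)(\nabla u)\cdot E_n = 0$ becomes $\langle x, E_n\rangle = 0$, which holds exactly on $\p\mfR^n_+ = \{x_n = 0\}$. Thus all three conditions of \eqref{eq-Mixed-u} are satisfied.

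\textbf{Step 3: the converse.} Suppose $u$ is radially symmetric with respect to $F_\theta$ and solves \eqref{eq-Mixed-u}; write $u(x) = \bar u(t)$ with $t = F_\theta^o(x)$. As in Step 1, $\nabla u = \bar u'(t)\,\nabla F_\theta^o(x)$, and using \eqref{one-homo} one computes $\nabla_\xi(\tfrac12 F_\theta^2)(\nabla u) = \bar u'(t)\,\tfrac{x}{t}$ (valid where $\bar u'(t)>0$; the case $\bar u'(t)<0$ is handled by homogeneity of $F_\theta^2$ on the appropriate ray, and $\bar u'(t)=0$ trivially). Taking the divergence and using $\operatorname{div}(x/t) = \operatorname{div}(x/F_\theta^o(x)) = (n-1)/t$ together with $\nabla t = \nabla F_\theta^o(x)$ and $\langle \nabla F_\theta^o(x), x/t\rangle = 1$ (again from \eqref{one-homo}), one obtains
\begin{align*}
\Delta_{F_\theta} u = \bar u''(t) + \frac{n-1}{t}\bar u'(t) = \frac{1}{t^{n-1}}\big(t^{n-1}\bar u'(t)\big)'.
\end{align*}
The equation $-\Delta_{F_\theta}u = -n$ thus reads $\big(t^{n-1}\bar u'(t)\big)' = n\,t^{n-1}$, so $t^{n-1}\bar u'(t) = t^n + c$ for a constant $c$. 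Regularity of $u$ at the origin (where $t\to 0$) forces $c = 0$, hence $\bar u'(t) = t$ and $\bar u(t) = \tfrac12 t^2 + d$. The Dirichlet condition $\bar u(r) = 0$ gives $d = -\tfrac12 r^2$, so $u(x) = \tfrac12\big((F_\theta^o(x))^2 - r^2\big)$, which is \eqref{eq-u}.

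\textbf{Main obstacle.} The only delicate point is the bookkeeping in Step 3 around where $\nabla u$ vanishes or changes sign: since $F_\theta$ is smooth only on $\mfR^n\setminus\{0\}$ and the formula for $\nabla_\xi(\tfrac12 F_\theta^2)$ must be applied to $\nabla u = \bar u'(t)\nabla F_\theta^o(x)$, one must track the sign of $\bar u'$ carefully (using that $\tfrac12 F_\theta^2$ is $2$-homogeneous but $F_\theta$ itself is only positively homogeneous, so $\nabla_\xi(\tfrac12 F_\theta^2)(-\eta) \ne -\nabla_\xi(\tfrac12 F_\theta^2)(\eta)$ in general). In fact, since the solution turns out to be radially increasing, $\bar u' \ge 0$ throughout and this subtlety does not actually bite; but verifying a priori that a radial solution is increasing (e.g. from the maximum principle, or from the ODE derived on the set where $\bar u' \ne 0$) is the one step requiring a little care. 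Everything else is a direct application of \eqref{one-homo} and the divergence theorem.
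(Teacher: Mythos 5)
Steps 1 and 2 are correct and match the paper's approach exactly: the paper's own proof is a one-line ``direct computation using \eqref{one-homo}'', and you have filled in exactly that computation, correctly invoking positive $1$-homogeneity of $\nabla_\xi(\tfrac12 F_\theta^2)$ (allowed since $F_\theta^o(x)>0$) together with $F_\theta(\nabla F_\theta^o)=1$ and $\nabla_\xi F_\theta(\nabla F_\theta^o(x))=x/F_\theta^o(x)$ to get $\nabla_\xi(\tfrac12 F_\theta^2)(\nabla u)=x$, from which the interior equation and both boundary conditions drop out.

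For the converse (Step 3), there is a real logical gap that you yourself flag but do not actually close. The ODE $\bar u'' + \tfrac{n-1}{t}\bar u' = n$ is derived under the assumption $\bar u'(t)>0$, because $\nabla_\xi(\tfrac12 F_\theta^2)$ is only \emph{positively} $1$-homogeneous (the gauge is not even), so the identity $\nabla_\xi(\tfrac12 F_\theta^2)(\bar u'(t)\nabla F_\theta^o(x))=\bar u'(t)\,x/t$ fails wherever $\bar u'(t)\le 0$; in that regime the expression depends on $F_\theta$ and $\nabla_\xi F_\theta$ evaluated at $-\nabla F_\theta^o(x)$ and is in general not even radially symmetric. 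You write ``since the solution turns out to be radially increasing, $\bar u'\ge 0$ throughout and this subtlety does not actually bite'' --- but that conclusion is exactly what the ODE argument is supposed to deliver, so as phrased the argument is circular. The clean way to seal this is to observe that the weak solution of \eqref{eq-Mixed-u} in $W_0^{1,2}(\Om_\star;\Gamma_\star)$ is \emph{unique} (the energy $w\mapsto\int(\tfrac12 F_\theta^2(\nabla w)+nw)$ is strictly convex, or one applies the maximum principle as the paper does in the remark after \cref{Thm-rearrangement-ani}); combined with Steps 1--2, which exhibit a radially symmetric solution, uniqueness immediately forces any solution --- in particular any radially symmetric one --- to be \eqref{eq-u}. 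If you prefer the ODE route, you must first rule out $\bar u'\le 0$ by an a priori argument (e.g.\ strong maximum principle / Hopf lemma) before integrating. Your regularity argument at $t=0$ forcing $c=0$ (via $\nabla u\in L^2$ and the volume element $\sim t^{n-1}\,dt$) is otherwise fine.
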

\begin{proof}
    A direct computation by using \eqref{one-homo}  leads to the assertion.
\end{proof}
%From this observation, one can see that the mixed boundary equation that we dealt with is modelled from the
%square of the $F^o$-distance function, and that explains why we call it a capillary PDE.
As a simple but important application of \cref{Thm-rearrangement}, we have
\begin{corollary}\label{cor-bound}
Let $u\in W_0^{1,2}(\Om;\Ga)$ be a weak solution to 
\begin{align*}
    \begin{cases}
        -\De_{F_\theta}u=-n\quad&\text{in }\Om,\\
        u=0,\quad&\text{on }\Ga,\\
        \na_\xi (\frac12F_\theta^2)(\na u)\cdot E_n=0\quad&\text{on }\Ga_1.
    \end{cases}
\end{align*}
Then, $u$ is bounded in $\Om$ with
$$\| u\|_{L^\infty(\Om)}\leq\frac{1}{2}\left(\frac{\vert\Om\vert}{{\bf b}_\theta}\right)^{\frac{2}{n}}.$$
\end{corollary}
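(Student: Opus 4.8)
The plan is to compare the solution $u$ with the explicit solution on the symmetrized domain furnished by \cref{Prop-distance-function}, using the rearrangement comparison in \cref{Thm-rearrangement}. First I would note that the data $f=-n$ satisfies the integrability hypothesis \eqref{property-f}: it is a negative constant, hence lies in every $L^q(\Om)$ on the bounded domain $\Om$. Its symmetrization is again the constant $f_\star\equiv -n$, since rearrangement preserves constant functions and $|\Om_\star|=|\Om|$. Therefore \cref{Thm-rearrangement} applies, and we obtain
\begin{align*}
0\ge u_\star(x)\ge z(x)\quad\text{for all }x\in\Om_\star,
\end{align*}
where $\Om_\star=\mcW_{r,\theta}\cap\mfR^n_+$ with ${\bf b}_\theta r^n=|\Om|$, and $z\in W_0^{1,2}(\Om_\star;\Ga_\star)$ solves the rearranged problem \eqref{eq-MBP-z} with right-hand side $f_\star=-n$.

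Next I would identify $z$ explicitly. By the remark following \cref{Thm-rearrangement-ani}, the solution of the rearranged mixed boundary problem is radially symmetric with respect to $F_\theta$; and by \cref{Prop-distance-function}, the unique such solution of
\begin{align*}
\begin{cases}
-\De_{F_\theta}z=-n&\text{in }\mcW_{r,\theta}\cap\mfR^n_+,\\
z=0&\text{on }\p\mcW_{r,\theta}\cap\mfR^n_+,\\
\na_\xi(\tfrac12F_\theta^2)(\na z)\cdot E_n=0&\text{on }\bar\mcW_{r,\theta}\cap\p\mfR^n_+,
\end{cases}
\end{align*}
is $z(x)=\tfrac12\left(F_\theta^o(x)^2-r^2\right)$. (One should check \eqref{eq-MBP-z} and \eqref{eq-Mixed-u} are the same problem, which is immediate since $(\Ga_1)_\star=\p\mfR^n_+\cap\bar\mcW_{r,\theta}$ by definition.) Consequently $z$ attains its minimum at the "center" where $F_\theta^o$ vanishes, with $\min_{\Om_\star}z=-\tfrac12 r^2$.

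Finally I would chain the estimates. For any $x\in\Om$, since $u\le 0$ by \cref{Lem-nonpositive} (or the first inequality in \eqref{ineq-ustar-z}), we have $|u(x)|=-u(x)$, and the definition of rearrangement together with the pointwise bound gives
\begin{align*}
\|u\|_{L^\infty(\Om)}=\|u_\star\|_{L^\infty(\Om_\star)}=-\min_{\Om_\star}u_\star\le -\min_{\Om_\star}z=\frac{r^2}{2}.
\end{align*}
Substituting $r=\left(|\Om|/{\bf b}_\theta\right)^{1/n}$ yields $\|u\|_{L^\infty(\Om)}\le \tfrac12\left(|\Om|/{\bf b}_\theta\right)^{2/n}$, which is the claim. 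The one subtlety worth a sentence is the identity $\|u\|_{L^\infty(\Om)}=\|u_\star\|_{L^\infty(\Om_\star)}$: this is just equimeasurability of $u$ and $u_\star$ (they have the same distribution function $\mu$), so $\mathrm{ess\,inf}\,u=\mathrm{ess\,inf}\,u_\star=u_\ast(0^+)$, and since both are non-positive the $L^\infty$ norm equals $-\mathrm{ess\,inf}$. I do not anticipate a genuine obstacle here — the only place needing care is making sure the hypotheses of \cref{Thm-rearrangement} are literally met (integrability of $f$ and matching of the boundary conditions in \eqref{eq-MBP-z} versus \eqref{eq-Mixed-u}), after which the proof is a direct substitution.
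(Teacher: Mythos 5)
Your proof is correct and follows essentially the same route as the paper's: apply \cref{Thm-rearrangement} to compare $u_\star$ with the solution $z$ of the rearranged problem, identify $z$ explicitly via \cref{Prop-distance-function}, and conclude by equimeasurability $\|u\|_{L^\infty}=\|u_\star\|_{L^\infty}$. You are slightly more explicit than the paper about checking the hypotheses on $f$ and about why $\|u\|_{L^\infty(\Om)}=\|u_\star\|_{L^\infty(\Om_\star)}$, but these are just the natural details the paper leaves implicit.
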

\begin{proof}
    By virtue of \cref{Thm-rearrangement}, we know that
    $0\geq u_\star(x)\geq z(x)$ for any $x\in\Om_\star$, where $z=z_\star$ is the radially symmetric solution (with respect to $F_\theta$) to the mix boundary problem of the rearranged PDE
\begin{align}\label{eq-F-theta}
    \begin{cases}
        -\De_{F_\theta}z=-n\quad&\text{in }\Om_\star,\\
        z=0,\quad&\text{on }\Ga_\star,\\
        \na_\xi (\frac12F_\theta^2)(\na z)\cdot E_n=0\quad&\text{on }(\Ga_1)_\star.
    \end{cases}
\end{align}
Thanks to \cref{Prop-distance-function}, we know that $ z(x)=\frac{F_\theta^o(x)^2-r^2}{2}$, where $r$ is the radius of $\Om_\star$, i.e.,
\begin{align*}
    {\bf b}_\theta r^n=|\Om_\star|=\vert\Om\vert.
\end{align*}
%A direct computation then shows that
Hence \begin{align*}
    \vert u_\star\vert\le \vert z\vert
    \leq\frac{1}{2}\left(\frac{\vert\Om\vert}{{\bf b}_\theta}\right)^{\frac{2}{n}}.
\end{align*}
The proof is thus completed by recalling that $\| u\|_{L^\infty}=\| u_\star\|_{L^\infty}$.
\end{proof}

\printbibliography

\end{document}